\documentclass[a4paper,11pt]{amsart}

\usepackage{a4wide}
\usepackage{amsmath}
\usepackage{amsfonts}
\usepackage{amssymb}
\usepackage{amsthm}
\usepackage[hyperfootnotes=false]{hyperref} 

\allowdisplaybreaks[3]

\numberwithin{equation}{section}

\newtheorem{theorem}{Theorem}[section]
\newtheorem{lemma}[theorem]{Lemma}

\newtheorem{remark}[theorem]{Remark}
\newtheorem{proposition}[theorem]{Proposition}
\newtheorem{definition}[theorem]{Definition}

\newcommand{\dd}{\,\mathrm{d}}
\newcommand{\var}{\text{-}\mathrm{var}}

\newcommand{\R}{\mathbb{R}}
\newcommand{\N}{\mathbb{N}}

\newcommand{\1}{\mathbf{1}}
\newcommand{\cal}{\mathcal}
\renewcommand{\d}{\mathrm{d}}

\title{Characterization of non-linear Besov spaces}

\author[Liu]{Chong Liu}
\address{Chong Liu, Eidgen\"ossische Technische Hochschule Z\"urich, Switzerland}
\email{chong.liu@math.ethz.ch}

\author[Pr\"omel]{David J. Pr\"omel}
\address{David J. Pr\"omel, University of Oxford, United Kingdom}
\email{proemel@maths.ox.ac.uk}

\author[Teichmann]{Josef Teichmann}
\address{Josef Teichmann, Eidgen\"ossische Technische Hochschule Z\"urich, Switzerland}
\email{josef.teichmann@math.ethz.ch}

\date{\today}

\begin{document}

\begin{abstract}
  The canonical generalizations of two classical norms on Besov spaces are shown to be equivalent even in the case of non-linear Besov spaces, that is, function spaces consisting of functions taking values in a metric space and equipped with some Besov-type topology. The proofs are based on atomic decomposition techniques and metric embeddings. Additionally, we provide embedding results showing how non-linear Besov spaces embed into non-linear $p$-variation spaces and vice versa. We emphasize that we neither assume the UMD property of the involved spaces nor their separability.
\end{abstract}

\maketitle 

\noindent\textbf{Key words:} atomic decomposition, Besov space, embedding theorem, metric space, \\$p$-variation, fractional Sobolev space. \\
\textbf{MSC 2010 Classification:} 30H25, 46E35, 54C35.


\section{Introduction}

There are various ways to define Besov spaces consisting of functions with values in the real numbers or even Banach spaces, see, e.g., the introductory books~\cite{triebel2010}, \cite{Leoni2017} or \cite{Sawano2018}. In many settings these definitions are shown to be equivalent but often under the assumption of additional properties of the target Banach spaces like the unconditional martingale difference (UMD) property or separability. Maybe most important in theory as well as in applications are equivalent characterizations of the $B^s_{p,q}$-Besov regularity of a function in a countable manner like by a respective Besov sequence space~$b^s_{p,q}$. The simplest countable representation is to evaluate a Besov regular function on a countable set, for instance, on the set of dyadic points in an interval. Frequently, the sequences in~$b^s_{p,q}$ have an interpretation as coefficients of basis expansions with respect to some wavelet bases or splines. This usually provides isomporphisms between Besov spaces and sequence spaces and very precise assertions of the expansion coefficients and embedding theorems.

In this article we consider Besov spaces~$B^s_{p,q}$ consisting of functions $f\colon [0,1]\to E$ with values in a general (non-linear) metric space~$(E,d)$. Note that the classical definition of Besov spaces in terms of integrals has a canonical extension to the metric setting, cf.~\eqref{eq: Besov function in metric spaces} below. While there are several other equivalent characterizations of scalar valued Besov spaces, in particular, in terms of atomic decomposition or of coefficients of wavelets expansions, most of them seem to have no direct interpretation for functions taking values in general metric spaces. However, we provide an important equivalent characterization of the $B^s_{p,q}$-Besov property of functions on the unit interval by the sequence of their values on dyadic points (and an additional continuity property) in the metric setting, cf.~\eqref{eq: discete Besov function in metric spaces} below. So far this equivalence we are dealing with is only known in the case of real valued functions due to several authors, see, e.g., the works of Kamont~\cite{Kamont1997}, of Bodin~\cite{Bodin2009} or of Rosenbaum~\cite{Rosenbaum2009}. But up to now, it was an open question whether this equivalence holds beyond finite dimensional target spaces. 

We shall provide first a general proof for Besov regular functions with values in general Banach spaces. In this case we actually obtain the equivalence between three differently defined Besov norms: the most classical one based on integrals, the one using second order differences and the one relying on first order differences on the dyadic points. By metric embedding results we then extend the equivalence to metric space valued curves for those two characterizations of Besov regularity which have canonical meanings for metric spaces. This extension argument crucially requires the equivalence of the Besov norms for functions taking values in general Banach spaces without assuming any additional property of the target Banach spaces like the UMD property.
 
It is remarkable that by linear methods like atomic decompositions, cf.~\cite{Scharf2012}, or wavelet expansions, cf.~\cite{Triebel2004}, we are able to prove a non-linear result for curves taking values in a metric space. Additionally, let us remark that we could only prove the result by abstract tensorization techniques for Besov functions taking values in nuclear spaces, which does not help for the general case since no sufficiently strong embedding results into nuclear spaces exist: therefore abstract tensorization is unfortunately not useful here and we had to look for more direct approaches. Furthermore, notice that our proof provides many more interesting discrete characterizations depending on the applied atomic or wavelet representation.

The motivation to consider functions from the interval $[0,1]$ to a metric space~$E$ comes from potential applications in stochastic analysis, the theory of stochastic processes and data science. Indeed, in these areas one frequently deals with (random) functions from a time interval, here normalized to $[0,1]$, taking values in a possibly non-linear space. An example of such a function is a ``rough path'' in the sense of T. Lyons, which is a mapping from an interval into the free nilpotent group generated by the step-$N$ signatures, see \cite{Friz2010}. 

In the last part of this article we present embedding results showing how Besov spaces embed into $p$-variation spaces and vice versa, both again consisting of functions with values in a general metric space. Note for example: if the target space is a complete metric group with left- or right-invariant metric, then our results actually allow to define a complete metric on metric group valued Besov curves and to characterize the so obtained metric space discretely via a sequence space of metric group valued sequences. In combination with our Besov--p-variation embedding results, this allow to derive fundamental results of rough path theory in a Besov space setting such as the continuity of the solution map of rough differential equations~\cite{Lyons1998} or Lyons--Victoir's extension theorem~\cite{Lyons2007a}, cf.~\cite{Liu2018}, which were classically proven in H\"older-type or $p$-variation distances. Here the discrete characterization of non-linear Besov spaces seems to be crucial and, moreover, Besov norms are particularly useful due to their smoothness properties, for instance, to construct unique rough path extensions with minimal Besov norms, see \cite{Liu2018}. Hence, the discrete characterization of non-linear Besov space paves the way for a novel full-fledged Besov theory of rough paths. 

Further possible applications include delicate regularity questions for (infinite dimensional space valued) stochastic processes, cf.~\cite{Kamont1997b} or~\cite{Veraar2009}, regularity of data streams, cf. \cite{Rosenbaum2011}, and applications to Besov versions of the theory of regularity structures, cf.~\cite{Hairer2017}.

\smallskip
\noindent{\bf Organization of the paper:} In Section~\ref{sec:main results} we present the main results and the considered function spaces are introduced. Section~\ref{sec:Besov norms} provides the equivalence of three classical norms on vector-valued Besov spaces. The embedding results between Besov spaces and $p$-variation spaces are proven in Section~\ref{sec:embedding results}.

\smallskip
\noindent{\bf Acknowledgment:} Chong Liu and Josef Teichmann gratefully acknowledge support by the ETH foundation. Josef Teichmann gratefully acknowledges support from SNF project 163425.

\section{Main results and function spaces}\label{sec:main results}

Let $(E,d)$ be a metric space. For $0<s<1$ and $1\leq p,q\leq \infty$, two versions of Besov spaces consisting of functions with values in the metric space~$E$ can be defined as follows.
\begin{itemize}
  \item $B_{p,q}^s([0,1];E)$ is the space of all measurable functions $f\colon [0,1] \to E $ such that 
        \begin{equation}\label{eq: Besov function in metric spaces}
          {\|f\|}_{s,p,q} :=\bigg(\int_{0}^1 \Big(\int_0^{1-h} \frac{d(f(x),f(x+h))^p}{h^{sp}}\dd x\Big)^{\frac{q}{p}}\frac{\dd h}{h} \bigg)^{1/q}<\infty.
        \end{equation}
  \item $b_{p,q,(1)}^s([0,1];E)$ is the space of all \textit{continuous} functions $f\colon [0,1] \to E $ such that
        \begin{equation}\label{eq: discete Besov function in metric spaces}
          {\|f\|}_{s,p,q;(1)} :=\bigg( \sum_{j \geq 0} 2^{jq(s - \frac{1}{p})} \Big(\sum_{m=0}^{2^j-1} { d\big(f\big(\frac{m+1}{2^j}),f\big(\frac{m}{2^j}\big) \big)  }^p\Big)^{\frac{q}{p}}\bigg)^{1/q} <\infty.
        \end{equation}
\end{itemize}
In the case of $p=\infty$ or $q=\infty$ we use the standard modifications of~\eqref{eq: Besov function in metric spaces} and~\eqref{eq: discete Besov function in metric spaces}. The space $B^s_{p,q}([0,1];E)$ is called ($E$-valued) Besov space and an element $f\in B^s_{p,q}([0,1];E)$ is said to be a ($E$-valued) Besov function.

The Besov spaces~$B^s_{p,q}([0,1];E)$ cover many well-known function spaces as special cases. Namely, the space $B^s_{\infty,\infty}([0,1];E)=:C^{s}([0,1];E)$ is the space of H\"older continuous functions, the space $B^s_{p,\infty}([0,1];E)$ is sometimes called Nikolskii space, and $B^s_{p,p}([0,1];E)$ is referred to as (fractional) Sobolev space. Note, that for $p=q$ the quantity~\eqref{eq: Besov function in metric spaces} is equivalent to 
\[
  \Big(\int_0^1 \int_0^1 \frac{d(f(s),f(t))^p}{|t-s|^{sp+1}} \dd s \dd t\Big)^{\frac{1}{p}},
\]
which usually serves as defining property of fractional Sobolev spaces, see~\cite[Example~5.16]{Friz2010} or \cite{Simon1990}. For more comprehensive introductions to these function spaces we refer, for instance, to \cite{triebel2010} or \cite{Peetre1976}. Furthermore, the space of continuous functions $f\colon [0,1] \to E$ is denoted by $C([0,1];E)$.

\begin{remark}\label{rmk:besov embedding}
  In the present work we focus on the parameters $s \in (0,1)$ and $p,q\in [1,\infty]$ with $s > 1/p$. Under these conditions, an $E$-valued Besov function $ f\in B_{p,q}^s([0,1];E)$ is immediately a continuous function, see e.g.~\cite[Corollary~26]{Simon1990}, and an application of the Garcia--Rodemich--Rumsey inequality, see e.g.~\cite[Theorem~A.1]{Friz2010}, implies furthermore the existence of a constant $C > 0$ such that
  \[
    d(f(t),f(s)) \leq C {|t-s|}^{s-\frac{1}{p}} 
  \]
for all $s,t \in [0,1]$ and all $f \in B^s_{p,q}([0,1];E)$. However, for a function $f\in b_{p,q,(1)}^s([0,1];E)$ the continuity is an additionally necessary assumption because the quantity ${\|f\|}_{s,p,q;(1)}$ only sees the function~$f$ evaluated at a countable subset of the interval~$[0,1]$.
\end{remark}

The main contribution of the present article is to show that the discrete characterization~\eqref{eq: discete Besov function in metric spaces} and the integral characterization~\eqref{eq: Besov function in metric spaces} are equivalent even in the case of Besov spaces consisting of functions taking values in a metric space. The precise statement is formulated in the next theorem. It follows from Theorem~\ref{thm: equivalence of three norms on Besov spaces} combined with a metric embedding (Subsection~\ref{subsec:embeddings of metric spaces into Banach spaces}), see Remark~\ref{rmk:proof main theorem}.

\begin{theorem}\label{thm:Besov characterization}
  Suppose that $(E,d)$ is a metric space. Let $s \in (0,1)$ and $p,q\in [1,\infty]$ be such that $s >1/p$. Then, one has
  $$
    B_{p,q}^s([0,1];E) = b_{p,q,(1)}^s([0,1];E),
  $$
  and $\|\cdot\|_{s,p,q}$ and $\|\cdot\|_{s,p,q;(1)}$ are equivalent, i.e., there exist constants $C_1,C_2 > 0$ only depending on $s$, $p$ and $q$ such that 
  $$
    C_1\|f\|_{s,p,q;(1)} \leq \|f\|_{s,p,q} \leq C_2\|f\|_{s,p,q;(1)}
  $$
  for all $f \in C([0,1];E)$.
\end{theorem}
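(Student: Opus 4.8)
The plan is to reduce the metric-valued statement to the Banach-valued one, which is exactly what \emph{Theorem~\ref{thm: equivalence of three norms on Besov spaces}} (the equivalence of the three norms in the vector-valued case) is set up to provide. Concretely, I would use a metric embedding $\iota\colon E \to V$ of the metric space $(E,d)$ into a Banach space $V$ that is isometric, i.e. $\|\iota(x)-\iota(y)\|_V = d(x,y)$ for all $x,y \in E$; the Kuratowski embedding into $\ell^\infty(E)$, or into $C_b(E)$, does exactly this. Since every quantity appearing in both $\|f\|_{B^s_{p,q}}$ and $\|f\|_{b^s_{p,q,(1)}}$ involves only the distances $d(f(x),f(y))$, we have the pointwise identities $\|f\|_{B^s_{p,q}} = \|\iota\circ f\|_{B^s_{p,q}}$ and $\|f\|_{b^s_{p,q,(1)}} = \|\iota\circ f\|_{b^s_{p,q,(1)}}$ for every continuous $f\colon[0,1]\to E$, where on the right-hand side the $V$-norm replaces the metric. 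Moreover $f$ is continuous if and only if $\iota\circ f$ is, and $\iota\circ f$ takes values in the closed (hence complete) subspace generated by its image. Therefore $f \in B^s_{p,q}([0,1];E)$ iff $\iota\circ f \in B^s_{p,q}([0,1];V)$, and likewise for $b^s_{p,q,(1)}$; applying the Banach-space equivalence to $\iota\circ f$ and translating back through the isometry yields the claimed two-sided bound with constants $C_1,C_2$ depending only on $s,p,q$ (the constants from the Banach case, which are uniform over all target Banach spaces).

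In more detail, I would carry out the argument in the following order. First, record the isometric embedding lemma: any metric space embeds isometrically into a Banach space (Kuratowski / Arens--Eells), a fact presumably established in the subsection on embeddings of metric spaces into Banach spaces. Second, observe that both defining expressions \eqref{eq: Besov function in metric spaces} and \eqref{eq: discete Besov function in metric spaces} are functions of the family $\{d(f(x),f(y))\}_{x,y}$ only, so replacing $d$ by the pulled-back norm leaves them unchanged; hence membership in either space, and the two norms, are invariant under post-composition with an isometry. Third, invoke \emph{Theorem~\ref{thm: equivalence of three norms on Besov spaces}} for the Banach space $V$ to get $C_1\|g\|_{b^s_{p,q,(1)}} \le \|g\|_{B^s_{p,q}} \le C_2\|g\|_{b^s_{p,q,(1)}}$ for all $g \in C([0,1];V)$, with $C_1,C_2$ independent of $V$. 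Fourth, specialize to $g = \iota\circ f$ and undo the isometry to conclude. The equality of sets $B^s_{p,q}([0,1];E) = b^s_{p,q,(1)}([0,1];E)$ then follows because finiteness of one norm is equivalent, via the two-sided estimate, to finiteness of the other, and both spaces consist of continuous functions under the standing assumption $s>1/p$ (for $b^s_{p,q,(1)}$ continuity is imposed, for $B^s_{p,q}$ it is automatic by Remark~\ref{rmk:besov embedding}).

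The only genuine subtlety — and I would flag it rather than call it an obstacle — is making sure the constants in the Banach-valued Theorem~\ref{thm: equivalence of three norms on Besov spaces} truly do not depend on the target space, so that they transfer to the arbitrary metric space $E$; this is where it matters that the vector-valued result is proven \emph{without} assuming UMD or separability, as emphasized in the introduction. Everything else is bookkeeping: checking the endpoint modifications for $p=\infty$ or $q=\infty$ go through verbatim (they only involve suprema of distances), and checking measurability/continuity transfers across $\iota$, which is immediate since $\iota$ is an isometric, hence continuous and open-onto-its-image, map. This is essentially the content of Remark~\ref{rmk:proof main theorem} referenced just before the statement, so the proof is short once the vector-valued equivalence is in hand.
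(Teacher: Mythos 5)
Your proposal is correct and is essentially the paper's own argument: the authors likewise reduce to the Banach-valued case via the Kuratowski isometric embedding into $C_b(E)$ (Subsection~\ref{subsec:embeddings of metric spaces into Banach spaces}), observe that both norms depend only on pairwise distances, and invoke Theorem~\ref{thm: equivalence of three norms on Besov spaces} with constants uniform over target Banach spaces, exactly as stated in Remark~\ref{rmk:proof main theorem}. Your flagged subtlety about target-space-independent constants is the same point the paper emphasizes in avoiding UMD and separability assumptions.
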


\begin{remark}
  Assuming~$E$ is a Banach space, a third characterization of Besov spaces is based on second order differences, see~\eqref{eq:Besov second order differences} in Subsection~\ref{subsec:vector-valued Besov spaces}. In this case, Theorem~\ref{thm: equivalence of three norms on Besov spaces} provides that all three characterizations of Besov spaces (consisting of Banach space valued functions) are equivalent. However, the characterization using second order differences seems to have no canonical generalization to metric space valued functions.  
\end{remark}

\begin{remark}
  Assuming~$E$ is the Euclidean space~$\R$, Theorem~\ref{thm:Besov characterization} and Theorem~\ref{thm: equivalence of three norms on Besov spaces} are well-known results. The equivalence stated in Theorem~\ref{thm:Besov characterization} goes back at least to the work of Kamont~\cite{Kamont1997}, where the result was proven for (anisotropic) Besov spaces on~$[0,1]^d$ and other versions can be found, e.g., in \cite{Rosenbaum2009} or \cite{Bodin2009}. The equivalence between~\eqref{eq: Besov function in metric spaces} and the Besov regularity formulated using second order differences (see~\eqref{eq:Besov second order differences}) was, for instance, considered in the paper~\cite{Ciesielski1993} by Ciesielski, Kerkyacharian and Roynette.
\end{remark}

Instead of Besov spaces, in stochastic analysis or probability theory the space of continuous functions of finite $p$-variation is more frequently used, which also possess a natural extension to the metric setting. To introduce its definition, we call $\mathcal{P}$ a partition of the interval $ [0,1]$ if $\mathcal{P}=\{[t_i,t_{i+1}]\,:\, 0=t_0 < t_1<\dots <t_n=1,\, n\in \mathbb{N}\}$. Then, for $p\in[1,\infty)$ the $p$-variation space $C^{p\var}([0,1];E)$ consists of all functions $f\in C([0,1];E)$ such that
\begin{equation*}
  \|f\|_{p\var}:=\bigg(\sup_{\mathcal{P}} \sum_{[s,t]\in \mathcal{P}} d(f(s),f(t))^p \bigg)^{\frac{1}{p}}<\infty,
\end{equation*}
where the supremum is taken over all partitions $\mathcal{P}$ of the interval~$[0,1]$. 

The next theorem relates non-linear Besov and $p$-variation spaces. It is a summary of Proposition~\ref{prop:variation embeddings} and~\ref{prop:besov embedding} extended to metric spaces by Kuratowski's embedding, see Subsection~\ref{subsec:embeddings of metric spaces into Banach spaces}.

\begin{theorem}\label{thm:embedding}
  Suppose that $(E,d)$ is a metric space. Let $s \in (0,1)$ and $p,q\in [1,\infty)$ be such that $s >1/p$. Set 
  \begin{equation*}
   \quad \beta:= \big(s + ((q^{-1}-p^{-1})\wedge 0)\big)^{-1}.  
  \end{equation*}
  Then, one has the following continuous embeddings 
  \begin{equation*}
    B^{s}_{p,q}([0,1];E)  \subset C^{\beta\var}([0,1];E) \subset B^{1/\beta}_{\beta,\infty}([0,1];E)
  \end{equation*}
  and there exist constants $C_1,C_2 > 0$ only depending on $s$, $p$ and $q$ such that 
  \begin{equation*}
   \| f \|_{1/\beta,\beta,\infty}  \leq C_1 \| f \|_{\beta \var}  \leq C_2  \| f \|_{s,p,q}
  \end{equation*}
  for all $f \in C([0,1];E)$.
\end{theorem}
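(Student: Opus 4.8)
The plan is to prove the two inclusions separately, working directly with the metric~$d$; equivalently one may first reduce to the Banach-valued case via Kuratowski's isometric embedding $E\hookrightarrow\ell^\infty(E)$ of Subsection~\ref{subsec:embeddings of metric spaces into Banach spaces}, since the three quantities $\|f\|_{\beta\var}$, $\|f\|_{B^s_{p,q}}$ and $\|f\|_{B^{1/\beta}_{\beta,\infty}}$ depend only on the distances $d(f(x),f(y))$. Note that the standing hypotheses force $p>1$ (as $1/p<s<1$), give $1/\beta=\min\{s,\,s-1/p+1/q\}\in(0,1)$, and yield $q/\beta>1$ when $q>p$ (again this is just $s>1/p$). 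Moreover every $f\in B^s_{p,q}$ is continuous by Remark~\ref{rmk:besov embedding}, so membership in $C([0,1];E)$ is automatic and the discrete characterization of Theorem~\ref{thm:Besov characterization} may be used.

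The analytic core is the following Sobolev-to-variation embedding: if $\sigma\in(0,1)$, $\rho\in(1,\infty)$ and $\sigma\rho>1$, then $B^\sigma_{\rho,\rho}([0,1];E)\subset C^{(1/\sigma)\var}([0,1];E)$ with $\|f\|_{(1/\sigma)\var}\le C_{\sigma,\rho}\|f\|_{B^\sigma_{\rho,\rho}}$. To prove it, put $\omega(a,b):=\int_a^b\int_a^b d(f(u),f(v))^\rho\,|u-v|^{-\sigma\rho-1}\dd u\dd v$; since the squares $[a,b]^2$ are nested disjointly in $[0,1]^2$, $\omega$ is superadditive, and a change of variables in~\eqref{eq: Besov function in metric spaces} gives $\omega(0,1)=2\|f\|_{B^\sigma_{\rho,\rho}}^\rho$. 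Applying the Garcia--Rodemich--Rumsey inequality, cf.~\cite[Theorem~A.1]{Friz2010} (which is valid for metric targets), on each subinterval $[a,b]$ with the gauge functions $\Psi(x)=x^\rho$ and $p(r)=r^{\sigma+1/\rho}$ — so that the only integrability constraint is $\sigma-1/\rho>0$ — produces a constant $C=C_{\sigma,\rho}$ with
\[
  d(f(a),f(b))\le C\,\omega(a,b)^{1/\rho}\,(b-a)^{\sigma-1/\rho}\qquad(0\le a\le b\le 1).
\]
Raising this to the power $1/\sigma$, noting $(\sigma-1/\rho)/\sigma=1-\tfrac{1}{\sigma\rho}$, summing over an arbitrary partition $\{t_i\}$ of $[0,1]$ and applying H\"older's inequality with conjugate exponents $\sigma\rho$ and $\sigma\rho/(\sigma\rho-1)$ give
\[
  \sum_i d(f(t_i),f(t_{i+1}))^{1/\sigma}\le C^{1/\sigma}\Big(\sum_i\omega(t_i,t_{i+1})\Big)^{1/(\sigma\rho)}\Big(\sum_i(t_{i+1}-t_i)\Big)^{1-1/(\sigma\rho)}\le C^{1/\sigma}\,\omega(0,1)^{1/(\sigma\rho)},
\]
using superadditivity and $\sum_i(t_{i+1}-t_i)=1$; taking the supremum over partitions proves the core embedding.

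From the core embedding the first inclusion follows by a short reduction carried out through Theorem~\ref{thm:Besov characterization}, so that the Besov seminorms become weighted $\ell^q$-norms of the dyadic increments $a_{j,m}:=d\big(f(\tfrac{m+1}{2^j}),f(\tfrac{m}{2^j})\big)$. If $q\le p$ then $\beta=1/s$, and applying the elementary norm inequality $\|\cdot\|_{\ell^p}\le\|\cdot\|_{\ell^q}$ to the sequence $c_j:=2^{j(s-1/p)}\big(\sum_m a_{j,m}^p\big)^{1/p}$ shows $\|f\|_{b^s_{p,p},(1)}\le\|f\|_{b^s_{p,q},(1)}$, i.e.\ $B^s_{p,q}\subset B^s_{p,p}$; the core embedding with $(\sigma,\rho)=(s,p)$, for which $sp>1$, then finishes. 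If $q>p$ then $\tfrac1\beta=s-\tfrac1p+\tfrac1q>\tfrac1q$, so Theorem~\ref{thm:Besov characterization} applies to $B^{1/\beta}_{q,q}$ as well; since $\tfrac1\beta-\tfrac1q=s-\tfrac1p$, the termwise bound $\sum_m a_{j,m}^q\le\big(\sum_m a_{j,m}^p\big)^{q/p}$ (again the $\ell^p\subset\ell^q$ inclusion) shows $\|f\|_{b^{1/\beta}_{q,q},(1)}\le\|f\|_{b^s_{p,q},(1)}$, i.e.\ $B^s_{p,q}\subset B^{1/\beta}_{q,q}$, and the core embedding with $(\sigma,\rho)=(1/\beta,q)$, for which $q/\beta>1$, finishes. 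In either case the resulting constant depends only on $s$, $p$, $q$.

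For the second inclusion $C^{\beta\var}\subset B^{1/\beta}_{\beta,\infty}$, let $\omega(a,b):=\|f\|_{\beta\var;[a,b]}^\beta$, which is superadditive with $\omega(0,1)=\|f\|_{\beta\var}^\beta$ and satisfies $d(f(x),f(x+h))^\beta\le\omega(x,x+h)$. Decomposing $[0,1-h]$ into the arithmetic progressions $\{u+kh:k\ge 0\}\cap[0,1-h]$, $u\in[0,h)$, and summing along each by superadditivity gives $\int_0^{1-h}d(f(x),f(x+h))^\beta\dd x\le h\,\omega(0,1)$ for every $h\in(0,1)$, hence $\|f\|_{B^{1/\beta}_{\beta,\infty}}\le\|f\|_{\beta\var}$. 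Combining the two inclusions, together with the bound $\|f\|_{\beta\var}\le C_2\|f\|_{B^s_{p,q}}$ from the first inclusion and the automatic continuity from Remark~\ref{rmk:besov embedding}, yields the asserted embeddings and norm estimates. The step I expect to be the main obstacle is the case $q>p$ of the first inclusion: one must first identify the correct exponent $\beta$ and then remove the mixed $(p,q)$-norm by the Besov Sobolev-type embedding $B^s_{p,q}\subset B^{1/\beta}_{q,q}$ before any Garcia--Rodemich--Rumsey argument applies — and this embedding, which is far from transparent at the level of the integral definition~\eqref{eq: Besov function in metric spaces}, becomes elementary precisely by passing to the discrete characterization of Theorem~\ref{thm:Besov characterization}.
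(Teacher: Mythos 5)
Your proposal is correct, and it reaches the same conclusions as the paper by a partly different route. The second inclusion is essentially identical to the paper's Proposition~\ref{prop:besov embedding} (your periodization over the progressions $u+k h$ even yields constant $1$ where the paper gets $2$), and your ``core embedding'' via Garcia--Rodemich--Rumsey with $\Psi(x)=x^\rho$, $p(r)=r^{\sigma+1/\rho}$ and a H\"older-plus-superadditivity summation is exactly the Friz--Victoir argument of \cite[Theorem~2]{Friz2006}, which the paper explicitly invokes as an admissible alternative in parts (2) and (3) of Proposition~\ref{prop:variation embeddings}. The genuine divergence is in the off-diagonal case: the paper treats $q\ge p$ \emph{directly} by proving that $\tilde\omega(s,t):=\|f\|^{q}_{B^{s}_{p,q};[s,t]}$ is superadditive (using $q/p\ge 1$ inside the iterated integral) and then combining it with the local H\"older bound $d(f(s),f(t))\lesssim \|f\|_{B^{s}_{p,q};[s,t]}|t-s|^{s-1/p}$ and the product rule for control functions, so that the whole of Section~\ref{sec:embedding results} is self-contained and independent of the discrete characterization. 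You instead first reduce $B^{s}_{p,q}$ to a diagonal space ($B^{s}_{p,p}$ for $q\le p$, $B^{1/\beta}_{q,q}$ for $q>p$) by passing through Theorem~\ref{thm:Besov characterization} and elementary $\ell^p\subset\ell^q$ inclusions; this is legitimate and non-circular (Theorem~\ref{thm:Besov characterization} is proved in Section~\ref{sec:Besov norms}, before the embeddings), and your exponent bookkeeping ($1/\beta-1/q=s-1/p$, $q/\beta>1$, $1/q<1/\beta<1$) checks out, but it makes the embedding theorem depend on the much heavier atomic-decomposition/interpolation machinery, whereas the paper's version keeps the two results independent. What the paper's route additionally buys is a \emph{local} statement --- the $\beta$-variation is controlled by the control function $\|f\|^{\beta}_{B^{s}_{p,q};[s,t]}(t-s)^{\alpha\beta}$ built from the restricted mixed norm itself --- which is lost after your reduction to the diagonal scale. (Minor point: the inequality $\|f\|_{B^{1/\beta}_{p,\infty}}\le C_1\|f\|_{\beta\var}$ in the statement should read $B^{1/\beta}_{\beta,\infty}$, consistent with the displayed inclusion chain; you correctly prove the $\beta$ version.)
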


\begin{remark}
  In the specific case of $E$ being the Euclidean space~$\R$, Theorem~\ref{thm:embedding} is again well-known and the necessary integral estimates to prove the embeddings can already be found in the works of Young~\cite{Young1936} and of Love and Young~\cite{Love1938}, see also e.g. \cite[Theorem~2]{Rosenbaum2009} for the second embedding. 
  
  For general metric spaces~$E$, the first embedding was proven for fractional Sobolev spaces $B^{s}_{p,p}([0,1];E)$ by Friz and Victoir in \cite[Theorem~2]{Friz2006}.
\end{remark}

\subsection{Embeddings of metric spaces into Banach spaces}\label{subsec:embeddings of metric spaces into Banach spaces}

It is due to, e.g., Kuratowski, see~\cite{Kuratowski1977}, that we can isometrically embed any metric space $ (E,d) $ into a subset of the Banach space of bounded continuous functions $ C_b(E) $, just via $ i\colon x \mapsto (y \mapsto d(x,y) - d(x_0,y)) $ (for some fixed anchoring point $x_0\in E$). There are several further generic isometric embeddings of metric spaces into Banach spaces, often spaces of functions with supremum norms, but usually it is hard to find embeddings into spaces with particular properties, like, e.g., the UMD property.

Additionally to Kuratowski's embedding, we also mention Aharoni's bi-lipschitz embedding, see~\cite{Aharoni1974}, since for our purposes it is actually enough to consider Lipschitz equivalence: every complete and separable metric space is Lipschitz equivalent to a closed subset of the sequence space 
$$
  c_0 :=\big \{(c_i)_{i \in \N} \,:\, c_i \in \R, \lim_{i \rightarrow \infty} c_i = 0\big \},
$$
that is, there exists a mapping $ T \colon E \to c_0 $ such that there 
$$
  K_1 d(x_1,x_2) \leq \| T(x_1) - T(x_2) \| \leq K_2 d(x_1,x_2), 
$$
for all $ x_1,x_2 \in E $  and for some positive constants $ 0 < K_1 < K_2 $.  

Hence, it is enough to show Theorem~\ref{thm:Besov characterization} and Theorem~\ref{thm:embedding} just for Banach spaces.

\subsection{Notation}

Let us briefly fix fairly standard notation for the sake of clarity. 

The natural numbers are denoted by $\N:=\{1,2,\dots\}$, the natural numbers including $0$ are $\N_0=\{0\} \cup \N$, $\mathbb{C}$ and $\R$ are the complex and real numbers, respectively, and $\mathbb{Z}$ stands for the set of all integers. For $x\in\R$ we set $\lfloor x\rfloor:=\sup\{y\in\mathbb{Z}\,:\, y< x\}$. For two real functions $a,b$ depending on variables $x$ one writes $a\lesssim b$  if there exists a constant $C>0$ such that $a(x) \leq C\cdot b(x)$ for all $x$, and $a\sim b$ if $a\lesssim b$ and $b\lesssim a$ hold simultaneously. 

Let $(E,\|\cdot\|)$ be a Banach space. $L^{p}(\mathbb{R};E)$ denotes the Lebesgue space of all measurable functions $f\colon \R\to E$ such that
\begin{equation*}
  \|f\|_{L^p} := \bigg( \int_{\R} \|f(x)\|^p\dd x \bigg)^{\frac{1}{p}}<\infty 
\end{equation*}
and $L^{p}((0,1);E)$ denotes the space of all measurable functions $f\colon (0,1)\to E$ such that
\begin{equation*}
  \|f\|_{L^p((0,1);E)} := \bigg( \int_{0}^1 \|f(x)\|^p\dd x \bigg)^{\frac{1}{p}}<\infty. 
\end{equation*}
The space $\mathcal{S}^{\prime}(\mathbb{R},E)$ is the space of $E$-valued tempered distributions, that is, the space of all continuous and linear mappings from the $\R$-valued Schwartz space $\mathcal{S}(\mathbb{R})$ into $E$. The Fourier transform on  
$\mathcal{S}^{\prime}(\mathbb{R},E)$ is $\mathcal{F}$ and its inverse is $\mathcal{F}^{-1}$.

On a Banach spaces~$(X,\|\cdot\|)$, two norms $\| \cdot\|_1$ and $\|\cdot\|_2$ are said to be (strongly) equivalent if there exist constants $C_1, C_2>$ such that 
\begin{equation*}
  C_1\|x\|_1\leq \|x\|_2\leq C_2 \|x\|_1, \quad \text{for all }x\in X.
\end{equation*}
We write $\|\cdot\|_1\sim \|\cdot\|_2 $ meaning $\| \cdot\|_1$ and $\|\cdot\|_2$ are equivalent norms on a Banach space~$X$.

\section{Three equivalent norms on vector-valued Besov spaces}\label{sec:Besov norms}

In this section we prove that the equivalence between three classical norms on Besov spaces extend to the Besov spaces consisting of functions taking values in a Banach space. For this purpose, we assume that $(E,\| \cdot \|)$ is a Banach space in the entire section. We start by introducing atomic decompositions and harmonic representations, which allows to describe the Besov regularity of functions.

\subsection{Atomic decompositions and harmonic representations}

As an initial step we present an expansion of continuous functions with Lipschitz atoms, relaxing the smoothness assumption on the atoms considered in the work of Scharf, Schmei\ss er and Sickel~\cite{Scharf2012}. For the reader's convenience we use the same notation and definitions as in~\cite{Scharf2012}. The theory and results are first developed for Besov spaces with domain~$\mathbb{R}$. The restriction to $[0,1]$ will be discussed below in Subsection~\ref{subsec:restriction to internval}. First let us recall the definition of vector-valued Besov spaces:

\begin{definition}
  Let $(\varphi_j)_{j \in \N_0}$ be a smooth dyadic resolution of unity. Let $1\le p,q\le \infty$ and $s \in \R$. For $f \in \mathcal{S}^\prime(\R;E)$ we define
  \begin{equation*}
    \|f\|_{\mathfrak{B}^s_{p,q}} := \Big(\sum_{j = 0}^\infty 2^{jsq} \|\mathcal{F}^{-1}(\varphi_j\mathcal{F}(f)) \|_{L^p}^q\Big)^{\frac{1}{q}}
  \end{equation*}
  and 
  \begin{equation*}
    \mathfrak{B}^s_{p,q}(\R;E) := \Big \{f \in \mathcal{S}^\prime(\R;E): \|f\|_{\mathfrak{B}^s_{p,q}} < \infty\Big\}.
  \end{equation*}
  The space~$\mathfrak{B}^s_{p,q}(\R;E)$ is called ($E$-valued) Besov space.
\end{definition}

Note that the Besov spaces with $0<p,q<1$ can be defined in the same manner as above, see \cite[Definition~2.1]{Scharf2010}. For more information about vector-valued tempered distributions and smooth dyadic resolution of unity we refer again to the paper~\cite{Scharf2010}.

If $0<s<1$, we can characterize vector-valued Besov spaces in terms of first order difference, which can be viewed as the normed space version of~\eqref{eq: Besov function in metric spaces}:

\begin{definition}
  Let $1\le p,q\le \infty$ and $s \in (0,1)$. Then $f \in B^s_{p,q}(\R;E)$ if $f \in L^p(\R;E)$ and
  \begin{equation}\label{eq:definition of Besov spaces by first difference}
    \|f |B^s_{p,q}(\mathbb{R};E)\|:= \|f\|_{L^p} + \bigg( \int_{\mathbb{R}} \Big(\int_{\mathbb{R}} \frac{\|f(x+h)-f(x)\|^p}{h^{sp}}\dd x\Big)^{\frac{q}{p}}\frac{\dd h}{h} \bigg)^{1/q}<\infty.
  \end{equation}
\end{definition}

It is a well-known result that for $s \in (0,1)$, one has $B^s_{p,q}(\R;E) = \mathfrak{B}^s_{p,q}(\R;E)$ so that $\| \cdot |B^s_{p,q}(\mathbb{R};E)\|$ and $\| \cdot \|_{\mathfrak{B}^s_{p,q}}$ are equivalent norms, see e.g. \cite{Amann1997}. Therefore, from now on we will not distinguish between $B^s_{p,q}(\R;E)$ and $\mathfrak{B}^s_{p,q}(\R;E)$, and will always use~\eqref{eq:definition of Besov spaces by first difference} as the Besov norm.

\begin{remark}
  If we additionally have $1/p < s < 1$, then the Besov space $B^s_{p,q}(\R;E)$ can be embedded into the H\"older space $C^{s-1/p}(\R;E) = B^{s-1/p}_{\infty,\infty}(\R;E)$ by the Garcia-Rodemich-Rumsey inequality as already discussed in Remark~\ref{rmk:besov embedding}. In particular, every $f \in B^s_{p,q}(\R;E)$ is a continuous function. For more details about embedding results for vector valued Besov spaces we refer to \cite{Simon1990}.
\end{remark}

A natural way to describe the regularity of functions is based on the concept of atoms. To recall this concept, denote by $Q_{j,m}:=\{x \in \mathbb{R}: |x-2^{-j}m|\leq 2^{-j-1}\}$ the interval with the center at $2^{-j}m$ and side length $2^{-j}$ for $m \in \mathbb{Z}$ and $j \in \mathbb{N}_0$. For $K \in \mathbb{N}_0$ and $d>1$, a $K$-times differentiable (in the case $K=0$ continuous) function $a\colon\mathbb{R} \rightarrow E$ is called ($E$-valued) \textit{$1$-atom (more exactly $1_K$-atom)} if $\textup{supp} \, a \subset d \cdot Q_{0,m}$   for an  $m \in \mathbb{Z}$ and $\sup_{x}\|D^{\alpha} a(x)\| \leq 1$  for all $\alpha\leq K$ and for all $x \in \R$. For $s \in \mathbb{R}$, $0<p\leq\infty$ and $L+1 \in \mathbb{N}_0$, a $K$-times differentiable (in the case $K=0$ continuous) function $a\colon\mathbb{R} \rightarrow E$ is called ($E$-valued) \textit{$(s,p)$-atom (more exactly $(s,p)_{K,L}$-atom)} if there exists a $j \in \mathbb{N}_0$ such that $\textup{supp} \ a \subset d \cdot Q_{j,m}$ for an $ m \in \mathbb{Z}$, $\sup_{x} \|D^{\alpha} a(x)\|  \leq 2^{-j (s-\frac{1}{p})+\alpha j}$ for all $|\alpha|\leq K$, and
  $\int_{\mathbb{R}} x^{\beta} a(x) \dd x=0$ for all $\beta \leq L$.
In particular, $a_{j,m}e_{j,m}$ is a vector-valued $(s,p)_{K,L}$-atom if $a_{j,m}$ is a scalar (i.e. $\mathbb{C}$-valued) $(s,p)_{K,L}$-atom and $e_{j,m} \in U_E:=\left\{x \in E: \|x\|=1\right\}$.\smallskip

Furthermore, we introduce the sequence space~$b_{p,q}$:

\begin{definition}
  Let $0<p\leq \infty$, $0<q\leq \infty$ and let $\lambda$ denote a real-valued sequence of the form
  \begin{align*}
    \lambda :=\left\{ \lambda_{j,m} \in \mathbb{R}\,:\, j \in \mathbb{N}_0, m \in \mathbb{Z} \right\}.
  \end{align*}
  The sequence space~$b_{p,q}$ is defined as
  \begin{align*}
    b_{p,q}:=\left\{\lambda: \|\lambda\, | b_{p,q}\| :=\left(\sum_{j=0}^{\infty} \left(\sum_{m \in \mathbb{Z}} |\lambda_{j,m}|^p\right)^{\frac{q}{p}} \right)^{\frac{1}{q}} <\infty \right\},
  \end{align*}
  appropriately modified in the cases $p=\infty$ or $q=\infty$.
\end{definition}

As already mentioned in Subsection~\ref{subsec:embeddings of metric spaces into Banach spaces}, for a Banach space $E$, the Besov spaces $B^s_{p,q}(\R;E)$ can be characterized via atomic representations and, as a consequence, $B^s_{p,q}(\R;E)$ is isomorphic to suitable sequence spaces.

\begin{proposition}\label{prop:HarmAtomDarst}
  Let $1 \le p,q\le \infty$, $s>0$ and $K \in \mathbb{N}_0$ with $K\geq 1+\lfloor s \rfloor$. Then $f \in {\cal S}'(\mathbb{R};E)$ belongs to $B_{p,q}^s(\mathbb{R};E)$ if and only if it can be represented by
  \begin{align*}
    f=\sum_{j \in \mathbb{N}_0} \sum_{m \in \mathbb{Z}} \lambda_{j,m} a_{j,m}(x),
  \end{align*}
  where $a_{j,m}$ are $E$-valued $1_K$-atoms (for $j=0$) or $E$-valued $(s,p)_{K,-1}$-atoms (for $j \in \mathbb{N}$) and $\lambda \in b_{p,q}$, and the convergence being in $L^p(\R;E)$.
  Furthermore, we have 
  \begin{align*}
    \|f |B^s_{p,q}(\mathbb{R};E) \| \sim \inf \|\lambda | b_{p,q}\|
  \end{align*}
  in the sense of equivalence of norms, where the infimum on the right-hand side is taken over all admissible atomic representations for $f$.
\end{proposition}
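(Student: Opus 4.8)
The plan is to reduce the $E$-valued atomic decomposition to the classical scalar case, which is fully treated in \cite{Scharf2012} (and, in the smooth-atom version, goes back to the wavelet/atomic theory of Triebel). The two directions are quite different in character.

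\emph{The easy direction (atoms $\Rightarrow$ Besov).} First I would show that if $f = \sum_{\nu,m}\lambda_{\nu,m}a_{\nu,m}$ with $\lambda\in b_{p,q}$ and the $a_{\nu,m}$ as described, then $f\in B^s_{p,q}(\R;E)$ with $\|f\,|B^s_{p,q}(\R;E)\| \lesssim \|\lambda\,|b_{p,q}\|$. Here one simply mimics the scalar convergence estimate: the support and derivative bounds in Definition~\ref{def: smooth atom}, together with the finite-overlap property of the dilated cubes $d\cdot Q_{\nu,m}$ at each fixed level $\nu$, give pointwise control of $\|\mathcal{F}^{-1}(\varphi_j\mathcal{F}(\lambda_{\nu,m}a_{\nu,m}))(x)\|$ by exactly the scalar quantity times $\|e_{\nu,m}\|=1$; summing over $m$ and then over $\nu$ reproduces the scalar argument verbatim, since all manipulations are absolute-value (resp. norm) estimates and the triangle inequality in $E$ replaces $|\cdot|$ in $\R$. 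The moment conditions on the atoms for $\nu\ge 1$ are used exactly as in the scalar proof to gain decay in $|j-\nu|$. Convergence in $L^p(\R;E)$ follows from the same bound.

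\emph{The hard direction (Besov $\Rightarrow$ atoms).} This is where the work lies. Given $f\in B^s_{p,q}(\R;E)$, I need to produce a genuinely $E$-valued atomic decomposition. The natural route is: (i) pick a compactly supported $C^K$ wavelet-type system $(\psi_{\nu,m})$ (a father/mother function whose translates and dilates reproduce $f$ via the scalar atomic theorem of \cite{Scharf2012}); (ii) define the coefficients by the $E$-valued pairings $\lambda_{\nu,m} e_{\nu,m} := \langle f, \tilde\psi_{\nu,m}\rangle$, where $\tilde\psi_{\nu,m}$ is the dual system and the pairing of an $E$-valued tempered distribution against a scalar Schwartz function lands in $E$; (iii) normalize to split off a unit vector $e_{\nu,m}\in U_E$ and a scalar $\lambda_{\nu,m}\ge 0$; (iv) verify $\lambda\in b_{p,q}$ with $\|\lambda\,|b_{p,q}\|\lesssim \|f\,|B^s_{p,q}(\R;E)\|$, and that the resulting $a_{\nu,m}:= \lambda_{\nu,m}^{-1}\langle f,\tilde\psi_{\nu,m}\rangle \cdot (\text{rescaled }\psi_{\nu,m})$ are admissible $E$-valued atoms. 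The crucial point — and the main obstacle — is that the $b_{p,q}$ estimate for the coefficient sequence must be obtained \emph{without} any UMD or Fourier-multiplier boundedness assumption on $E$: one cannot invoke vector-valued Littlewood--Paley square function estimates. The way around this is that first-order (difference) characterizations and atomic/wavelet characterizations of $B^s_{p,q}$ are both ``integral'' rather than ``square-function'' characterizations, so they transfer to arbitrary Banach spaces; concretely, the local mean / Peetre-type estimate $\|\langle f,\tilde\psi_{\nu,m}\rangle\| \lesssim$ (local $L^p$-average of $\nu$-th level differences of $f$) holds with norms replacing absolute values, because it is proved by Taylor expansion and the moment conditions, using only the triangle inequality. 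Summing these local estimates reconstitutes exactly $\|f\,|B^s_{p,q}(\R;E)\|$ on the right, via the difference characterization~\eqref{eq:definition of Besov spaces by first difference}. I would cite \cite{Scharf2012} for the scalar template and emphasize that every inequality there is of a form ($|\cdot|\le$ something) that upgrades to ($\|\cdot\|_E \le$ the same something) by inspection; the only genuinely new point to check is that the reproducing identity $f = \sum \langle f,\tilde\psi_{\nu,m}\rangle\,\psi_{\nu,m}$ still holds in $L^p(\R;E)$, which follows from density of $E$-valued Schwartz functions and continuity of the synthesis operator established in the easy direction.

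\emph{Assembling the equivalence.} Combining the two directions: the infimum of $\|\lambda\,|b_{p,q}\|$ over admissible representations is bounded above (easy direction) and below (hard direction) by a constant multiple of $\|f\,|B^s_{p,q}(\R;E)\|$, which is the claimed equivalence of norms. The restriction $K\ge 1+\lfloor s\rfloor$ enters only to guarantee that the analyzing/synthesizing functions have enough smoothness and vanishing moments for the Taylor-expansion estimates at the given smoothness level $s$; for the range $s\in(0,1)$ relevant to this paper, $K=1$ suffices, and the atoms can even be taken merely Lipschitz, which is the relaxation advertised before the proposition.
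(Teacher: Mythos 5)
Your overall strategy is sound, but note that the paper does not actually prove this proposition: it is quoted verbatim from \cite[Theorem~3.7]{Scharf2012}, and the authors simply refer to \cite{Scharf2010} and \cite{Scharf2012} for the proof. Your sketch is essentially a reconstruction of what those references do --- the synthesis direction by termwise norm estimates, the analysis direction by local means/dual systems, and the key observation that for the $B$-scale all maximal and reproducing estimates act on the scalar function $x\mapsto\|\cdot\|_E$, so no UMD or Littlewood--Paley theory for $E$ is needed --- so in that sense you are on the same route as the source the paper relies on. Two details in your sketch are off, though neither is fatal. First, the atoms for $\nu\ge 1$ are $(s,p)_{K,-1}$-atoms, i.e.\ $L=-1$ in Definition~\ref{def: smooth atom}, so they satisfy \emph{no} moment conditions; your claim that ``the moment conditions on the atoms are used exactly as in the scalar proof to gain decay in $|j-\nu|$'' is vacuous. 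The decay for $j<\nu$ comes instead from the smallness of the atom's support together with $s>0$ (more precisely $s>\sigma_p$, which for $p\ge 1$ is $s>0$), while the decay for $j>\nu$ uses the $C^K$ bounds of the atom against the vanishing moments of $\mathcal{F}^{-1}\varphi_j$; this is exactly why the hypothesis $K\ge 1+\lfloor s\rfloor$ and the restriction $s>0$ appear. Second, your closing appeal to ``density of $E$-valued Schwartz functions'' to justify the reproducing identity fails when $p=\infty$ or $q=\infty$ (both admitted in the statement); there one should identify the limit of the synthesis series with $f$ by testing against functionals in $E'$ and using the scalar reproducing formula, combined with the norm convergence of the series already established in the easy direction.
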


The above Proposition~\ref{prop:HarmAtomDarst} can be found in \cite[Theorem~3.7]{Scharf2012}. We refer the interested reader to \cite{Scharf2010} and \cite{Scharf2012} for a very detailed proof of the theorem. 

It is also possible to obtain a non-smooth atomic representation of Besov spaces $B^s_{p,q}(\R;E)$, which allows us to relax the assumptions about the smoothness of the atoms $a_{j,m}$ as required in Proposition~\ref{prop:HarmAtomDarst}. This turns out to be very useful for our purposes. In particular, we will use \textit{Lipschitz atoms} which are defined as follows:

\begin{definition}\label{def: Lip-atom}
  We say that a function $a\colon \R \rightarrow E$ is a ($E$-valued) Lipschitz atom (in short Lip-atom) if there is a $d > 1$ such that
  $ supp \ a\subset d \cdot Q_{j,m}$ for some $j \in \N_0$ and for some $m \in \mathbb{Z}$, $\sup_{x} \|a(x)\| \le 2^{-j(s-\frac{1}{p})}$ and $\|a(x)|\textup{Lip}\| \leq 2^{-j(s-\frac{1}{p})+j}$, where $\|a(x)|\textup{Lip}\| := \sup_{x \neq y} \frac{\|a(x) - a(y)\|}{|x - y|}$ is the Lipschitz constant of $a(x)$.
\end{definition}

As for the smooth case, $a_{j,m}(\cdot)e_{j,m}$ is a $E$-valued Lip-atom if $a_{j,m}$ is a scalar Lip-atom and $e_{j,m} \in E$ satisfies that $\|e_{j,m}\| \le 1$.

In turns out that the atomic decomposition provided in Proposition~\ref{prop:HarmAtomDarst} extends to its counterpart in terms of Lip-atoms:

\begin{proposition}\label{prop: Lip-atomic representation of Besov functions}
  Let $ 1\le p,q \le \infty$ and $0<s<1$. Then $f \in \mathcal{S}^\prime(\R;E)$ belongs to $B^s_{p,q}(\R;E)$ if and only if it can be represented as
  $$
    f = \sum_{j = 0}^\infty \sum_{m \in \mathbb{Z}} \lambda_{j,m} a_{j,m}, 
  $$
  where $a_{j,m}$ are Lip-atoms, $\lambda \in b_{p,q}$, and the convergence being in $L^p(\R;E)$. Furthermore, we have
  \begin{align*}
    \|f|B^s_{p,q}(\mathbb{R};E) \|  \sim \inf \| \lambda | b_{p,q}\|  
  \end{align*}
  in the sense of equivalence of norms, where the infimum on the right-hand side is taken over all admissible representations for~$f$.
\end{proposition}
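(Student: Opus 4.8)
The plan is to deduce Proposition~\ref{prop: Lip-atomic representation of Besov functions} from the smooth atomic decomposition in Proposition~\ref{prop:HarmAtomDarst} by two separate arguments, one for each direction of the equivalence. For the \emph{easy} direction (``a Lip-atomic series lies in $B^s_{p,q}$ with the expected norm bound''), I would verify directly that a Lip-atom as in Definition~\ref{def: Lip-atom} is, up to a fixed multiplicative constant depending only on $s$ and $p$, an $(s,p)_{1,-1}$-atom in the sense of Definition~\ref{def: smooth atom}\,(ii) with $K=1$ and $L=-1$ (so no moment conditions). Indeed the size condition $\sup_x\|a(x)\|\le 2^{-\nu(s-1/p)}$ is exactly the $\alpha=0$ bound, and the Lipschitz bound $\|a|\mathrm{Lip}\|\le 2^{-\nu(s-1/p)+\nu}$ plays the role of the $\alpha=1$ derivative bound; since we only demand $K\ge 1+\lfloor s\rfloor = 1$ for $0<s<1$, Proposition~\ref{prop:HarmAtomDarst} applies (the $\nu=0$ atoms being handled identically as $1_1$-atoms). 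Hence $\|f|B^s_{p,q}\|\lesssim \inf\|\lambda|b_{p,q}\|$ for every Lip-atomic representation. The subtlety here is purely bookkeeping: Lip-atoms are \emph{not literally} $C^1$, so one cannot quote Proposition~\ref{prop:HarmAtomDarst} verbatim; instead I would re-examine the proof of the ``synthesis'' half of Proposition~\ref{prop:HarmAtomDarst} (as in \cite{Scharf2012}) and observe that it only uses the first-order difference/Taylor estimate $\|a(x+h)-a(x)\|\le \|h\|\,\|a|\mathrm{Lip}\|$ together with the support and size bounds — all available for Lip-atoms. Alternatively, and more cleanly, one mollifies each Lip-atom at scale $2^{-\nu}$ to produce a genuine $(s,p)_{K,-1}$-atom plus a controllable remainder, but the direct route avoids introducing mollifiers.

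For the \emph{converse} direction (``every $f\in B^s_{p,q}$ admits a Lip-atomic representation with $\inf\|\lambda|b_{p,q}\|\lesssim \|f|B^s_{p,q}\|$''), the key observation is that the atoms produced in Proposition~\ref{prop:HarmAtomDarst} are $C^K$ with $K\ge 1$, hence in particular Lipschitz. So starting from a smooth atomic representation $f=\sum_{\nu,m}\lambda_{\nu,m}a_{\nu,m}$ with $\|\lambda|b_{p,q}\|\lesssim \|f|B^s_{p,q}\|$, each $a_{\nu,m}$ already satisfies the size bound of Definition~\ref{def: Lip-atom}, and by the mean value inequality its Lipschitz constant is bounded by $\sup_x\|D^1 a_{\nu,m}(x)\|\le 2^{-\nu(s-1/p)+\nu}$, which is precisely the Lip-atom Lipschitz bound. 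Thus the very same representation is a valid Lip-atomic representation, giving $\inf_{\mathrm{Lip}}\|\lambda|b_{p,q}\|\le \inf_{\mathrm{smooth}}\|\lambda|b_{p,q}\|\sim\|f|B^s_{p,q}\|$. Combining the two directions yields the claimed norm equivalence and, in particular, that membership in $B^s_{p,q}(\R;E)$ is equivalent to the existence of a convergent (in $L^p(\R;E)$) Lip-atomic representation. The $L^p$-convergence statement transfers automatically since it is asserted in Proposition~\ref{prop:HarmAtomDarst} and the series is unchanged in the converse direction, while in the synthesis direction it follows from the size bounds: $\sum_{\nu,m}|\lambda_{\nu,m}|\,\|a_{\nu,m}\|_{L^p} \lesssim \sum_\nu 2^{-\nu(s-1/p)}2^{\nu/p}\big(\sum_m|\lambda_{\nu,m}|^p\big)^{1/p} = \sum_\nu 2^{-\nu s}\big(\sum_m|\lambda_{\nu,m}|^p\big)^{1/p}<\infty$ by Hölder in $\nu$ since $s>0$.

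I expect the main obstacle to be the first (synthesis) direction: justifying that Lip-atoms — which lack the $C^1$ regularity nominally required in Definition~\ref{def: smooth atom} — can be fed into the atomic-decomposition machinery of \cite{Scharf2012} without loss. The cleanest fix is to record a short lemma stating that in the range $0<s<1$ the convergence and norm estimates in the ``$\Leftarrow$'' part of Proposition~\ref{prop:HarmAtomDarst} go through whenever each atom is merely Lipschitz (rather than $C^1$) with the stated support and size bounds, since the proof there only invokes first-order Taylor/difference estimates on the atoms — no second-order information is used when $K=1$. Everything else is a direct comparison of the defining inequalities of the two notions of atom and is routine.
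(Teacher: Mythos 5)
Your decomposition of the problem matches the paper's in one half: the direction ``$f\in B^s_{p,q}(\R;E)\Rightarrow$ Lip-atomic representation'' is handled in both cases by the observation that the smooth $(s,p)_{K}$-atoms of Proposition~\ref{prop:HarmAtomDarst} with $K\ge 1$ are already Lip-atoms (mean value inequality), so the smooth representation is itself an admissible Lip-atomic one. Where you diverge is the synthesis direction. The paper does not re-open the Fourier-analytic proof of Proposition~\ref{prop:HarmAtomDarst}; it invokes the non-smooth atomic decomposition of Schneider--Vyb\'iral \cite[Theorem~2.6]{Schneider2013}, whose mechanism is to re-expand each merely Lipschitz atom $a_{\nu,m}$ (a suitably normalized element of a H\"older--Zygmund space of order $\sigma>s$, localized at scale $2^{-\nu}$) into a convergent series of genuinely smooth atoms at scales $j\ge\nu$, and then to re-sum the double series using the geometric gain in $j-\nu$; the vector-valued transfer works precisely because the smooth decomposition of Proposition~\ref{prop:HarmAtomDarst} is available for $E$-valued functions. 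Your route --- checking that the synthesis estimates in \cite{Scharf2012} only ever use the first-order difference bound $\|a(x+h)-a(x)\|\le |h|\,\|a|\mathrm{Lip}\|$ when $K=1$, $L=-1$ --- is viable and more self-contained, but it carries a hazard you should flag: for a general Banach space $E$ a Lipschitz curve need not be differentiable anywhere (Rademacher fails without the Radon--Nikod\'ym property), so any step of the smooth proof that genuinely manipulates the pointwise derivative $D^1a$ (Taylor remainder in integral form, integration by parts against $\mathcal{F}^{-1}\varphi_j$) must be rewritten in terms of difference quotients, not merely re-read. Two smaller points: a single mollification at scale $2^{-\nu}$ does not yield a ``controllable remainder'' --- the remainder has atomic size but no smoothness, and closing that argument forces you into the same multiscale telescoping that \cite{Schneider2013} performs; and in your $L^p$-convergence estimate the factor should be $|\operatorname{supp}a_{\nu,m}|^{1/p}\sim 2^{-\nu/p}$ rather than $2^{\nu/p}$, with the passage to $\big(\sum_m|\lambda_{\nu,m}|^p\big)^{1/p}$ justified by the bounded overlap of the supports at each level rather than by H\"older over $m\in\mathbb{Z}$.
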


\begin{proof}
  In view of Proposition~\ref{prop:HarmAtomDarst}, we have a smooth atomic representation for $E$-valued Besov functions, and thus we can utilize the proof of \cite[Theorem~2.6]{Schneider2013} (originally preformed for the scalar case) in combination with the observation that $E$-valued smooth $(s,p)_K$-atoms ($K \ge 1$) are $E$-valued Lip-atoms. 
\end{proof}

\begin{remark}
  In fact, it is possible to obtain Proposition~\ref{prop: Lip-atomic representation of Besov functions} for a more general vector-valued $(\sigma,p)$-atomic representation of Besov functions, where the $(\sigma,p)$-atoms for any $s <\sigma \le 1$ are defined in the sense of Definition~2.3 in \cite{Schneider2013} (but with multiplying the weight $2^{-j (s-1/p)}$). The proof follows by the same arguments as given in the proof of \cite[Theorem~2.6]{Schneider2013}, noticing that any $E$-valued smooth $(s,p)_K$-atoms ($K \ge 1$) are $E$-valued $(\sigma,p)$-atoms with $s<\sigma\le 1$.
\end{remark}

\subsection{Besov spaces on intervals}\label{subsec:restriction to internval}

The representation from Proposition~\ref{prop: Lip-atomic representation of Besov functions} of Besov function in term of Lip-atoms can be transferred to Besov functions considered on the interval~$[0,1]$, as discussed below.

Let $0<s<1$ and $1\le p,q \le \infty$. For the interval $[0,1]$, the Besov space $B^s_{p,q}([0,1];E)$ is defined as 
$$
  B^s_{p,q}((0,1);E) := \big\{f \in \mathcal{S}^\prime((0,1);E): \exists g \in B^s_{p,q}(\R;E), g|_{(0,1)} = f\big\},
$$
and 
$$
  \|f\|_{B^s_{p,q}((0,1))} := \inf \|g|B^s_{p,q}(\mathbb{R};E)\|,
$$
where the infimum is taken over all $g \in B^s_{p,q}(\R;E)$ such that $g|_{(0,1)} = f$. One important result in Besov space theory on (bounded) interval is that there exist a continuous trace and extension operator
\[
  \operatorname{re}\colon B^s_{p,q}(\mathbb{R};E) \to B^s_{p,q}((0,1);E) 
\]
and
\[
  \operatorname{ext}\colon B^s_{p,q}((0,1);E) \to B^s_{p,q}(\mathbb{R};E)
\]
such that $\operatorname{re} \circ \operatorname{ext} = \text{id}$, where $\text{id}$ is the identity on $B^s_{p,q}((0,1);E)$. While this extension theorem is usually formulated for scalar valued Besov functions, its proof naturally carries over to the vector-valued case, see e.g. \cite[Theorem~2.2]{Rychkov1999}. Since it holds also for any $s \in \R$, $0<p,q\le \infty$ and any Lipschitz domain $\Omega \subset \R^n$ for $n \ge 1$, we derive that
$$
  \|f\|_{B^s_{p,q}(0,1)} \sim \|f\|_{L^p((0,1);E)} + \bigg(\int_0^1 \Big(\int_0^{1-h} \frac{\|f(x+h) - f(x)\|^p}{h^{sp}} \dd x\Big)^{\frac{q}{p}}\frac{\dd h}{h}\bigg)^{1/q}.
$$
Note that the concepts of atomic (wavelet or spline) expansions also carry over by restriction, that is, one only takes into account the atoms whose support has an overlap with $(0,1)$. In particular, every function $f \in B^s_{p,q}((0,1);E)$ with $0<s<1$ and $p,q \in [1,\infty]$ admits a Lip-atomic representation
\begin{equation}\label{Lip-atomic representation of Besov functions on unit interal}
  f(t) = \sum_{j=0}^\infty \sum_{m = 0}^{2^j} \lambda_{j,m}a_{j,m}(t),
\end{equation}
where $a_{j,m}$ are Lip-atoms according to Definition~\ref{def: Lip-atom} and $\lambda =(\lambda_{j,m})_{j\ge 0,m=0,\dots,2^j}$ satisfies that $\|\lambda | b_{p,q}\|<\infty$. Finally, if additionally $s > 1/p$, then $f \in  B^s_{p,q}((0,1);E)$ is a continuous function and therefore $f(0)$ and $f(1)$ are well-defined. In this case we will use the notation $ B^s_{p,q}([0,1];E)$ for the $E$-valued Besov spaces on $(0,1)$.

\subsection{Three equivalent norms}\label{subsec:vector-valued Besov spaces}

In this subsection we simplify the notation and write $\| \cdot\|_{B^s_{p,q}}$ for the Besov norms on $ B^s_{p,q}([0,1];E)$ instead of $\|\cdot \|_{B^s_{p,q}((0,1))}$. Moreover, from now on, we will always assume that $0<s<1$, $1\le p,q\le\infty$ and $s > 1/p$. 

Let us recall the following definitions of Besov spaces:
\begin{itemize}
  \item $B^s_{p,q}([0,1];E)$ is the space of all continuous functions~$f\colon[0,1] \to E $ such that   
        \[
          {\|f\|}_{B^s_{p,q}} := \|f(0)\| + \bigg(\int_{0}^1 \Big(\int_0^{1-h} \frac{\|f(x+h) - f(x)\|^p}{h^{sp}}\dd x\Big)^{\frac{q}{p}}\frac{\dd h}{h} \bigg)^{\frac{1}{q}}<\infty.
        \]
  \item $b_{p,q,(1)}^s([0,1];E) $ is the space of all continuous functions~$f\colon [0,1] \to E $ such that
        \[
          {\|f\|}_{b^s_{p,q},(1)} :=  {\|f(0)\|} +\bigg( \sum_{j \geq 0} 2^{jq(s - \frac{1}{p})} \Big(\sum_{m=0}^{2^j-1} {\Big \| f\big(\frac{m}{2^j}\big) - f\big(\frac{m+1}{2^j}\big) \Big \| }^p\Big)^{\frac{q}{p}}\bigg)^{\frac{1}{q}}<\infty.
        \]
  \item $b_{p,q,(2)}^s([0,1];E) $ is the space of all continuous functions~$ f\colon[0,1] \to E $ such that 
        \begin{align}\label{eq:Besov second order differences}
        \begin{split}
          &{\|f\|}_{b^s_{p,q},(2)} :=  {\|f(0)\|} + {\| f(1) - f(0) \|} \\ 
          & \quad\quad+\bigg( \sum_{j \geq 1} 2^{jq(s-\frac{1}{p})}  \Big(\sum_{m=0}^{2^{j-1}-1} {\Big \| f\big(\frac{m}{2^{j-1}}\big) - 2 f \big(\frac{2m+1}{2^j}\big) + f\big(\frac{m+1}{2^{j-1}}\big)  \Big \| }^p\Big)^{\frac{q}{p}}\bigg)^{\frac{1}{q}}<\infty .
        \end{split}
        \end{align}
\end{itemize}
For $p=\infty$ or $q=\infty$ the standard modifications are applied in all the three definitions. Note that all three norms define Banach space topologies on the set of functions where the respective norms are finite.

If $p=q$, then we use the short-writing $b^s_{p,(1)}$ and $b^s_{p,(2)}$ for $b^s_{p,p,(1)}$ and $b^s_{p,p,(2)}$, respectively. 

\begin{remark}
  Besov spaces are classically, i.e., in the setting of real valued functions, characterized by three overlapping but different expansion methods: atomic decompositions (see, e.g., \cite{Scharf2012} and the references therein), wavelet expansions (see, e.g., \cite{Triebel2004}), and by expansions with respect to piecewise harmonic functions (or, more generally, with respect to splines, see, e.g., \cite{Kamont1997}). All methods have their advantages and it is non-trivial to translate results into each other. In \cite{Scharf2012} it has been shown that for generic Banach spaces $E$ results for atomic decompositions also hold true (with $ E $-valued atoms) such as in the real valued case, in particular no UMD property of $E$ is needed. Translating results on expansion coefficients into results on transformed coefficients, for instance translating results which depend on second order differences into results on first order differences, has been directly performed in real-valued case in \cite{Kamont1997}. 
\end{remark}

In the following, we will verify that the three different definitions of Besov spaces indeed lead to the same function space. That is, we will show that for any Banach space~$E$, the above three norms are equivalent on $B^s_{p,q}([0,1];E)$. In particular, we will extend the result of Theorem~1 in \cite{Rosenbaum2009} from the scalar case to the vector valued case. Our proof scheme goes as follows: First we will prove the equivalence of these three norms for Sobolev spaces (which corresponds to the case $p = q$) in the next proposition, then by an interpolation argument we generalize the equivalence to all Besov spaces, i.e. allowing for $p \neq q$.

\begin{proposition}\label{prop: equivalence of three norms for Sobolev spaces}
  Let $E$ be a Banach space, $s\in (0,1)$ and $p\in [1,\infty]$ such that $s>1/p$. Then, the three above Besov norms are equivalent, i.e., 
  \begin{equation*}
    \|f\|_{B^s_{p,p}} \sim {\|f\|}_{b^s_{p,p},(1)}  \sim {\|f\|}_{b^s_{p,p},(2)} , \quad f\in B_{p,p}^s([0,1];E),
  \end{equation*}
  and, in particular, the three Banach spaces are isomorphic:
  \[
    B_{p,p}^s([0,1];E) = b_{p,(1)}^s([0,1];E) = b_{p,(2)}^s([0,1];E).
  \]
\end{proposition}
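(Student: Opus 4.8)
The plan is to compare the three norms by routing everything through the discrete ones, using the Lipschitz-atomic decomposition of Proposition~\ref{prop: Lip-atomic representation of Besov functions} (in its interval form~\eqref{Lip-atomic representation of Besov functions on unit interal}) as the only bridge between the integral norm $\|\cdot\|_{B^s_{p,p}}$ and the dyadic quantities. Concretely I would establish the four one-sided bounds (a) $\|f\|_{b^s_{p,p},(2)}\lesssim\|f\|_{b^s_{p,p},(1)}$, (b) $\|f\|_{b^s_{p,p},(1)}\lesssim\|f\|_{b^s_{p,p},(2)}$, (c) $\|f\|_{B^s_{p,p}}\lesssim\|f\|_{b^s_{p,p},(2)}$ and (c$'$) $\|f\|_{b^s_{p,p},(2)}\lesssim\|f\|_{B^s_{p,p}}$; then (a)+(b) give $\|\cdot\|_{b^s_{p,p},(1)}\sim\|\cdot\|_{b^s_{p,p},(2)}$, (c)+(c$'$) give $\|\cdot\|_{B^s_{p,p}}\sim\|\cdot\|_{b^s_{p,p},(2)}$, and concatenating yields all three equivalences and the claimed identification of the Banach spaces.

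Estimates (a) and (b) are elementary identities for the dyadic increments, with no reference to $E$ beyond the triangle inequality. Writing each level-$j$ second difference as the difference of the two level-$j$ first differences refining it, and noting that across the index $m$ each first difference is used at most once, gives (a) level by level. For (b), let $D_{j,m}$ and $\delta_{j,m}$ denote the level-$j$ first and second differences; from $D_{j+1,2m}=\tfrac12(D_{j,m}-\delta_{j+1,m})$ and $D_{j+1,2m+1}=\tfrac12(D_{j,m}+\delta_{j+1,m})$ one gets by iteration that every level-$\ell$ first difference equals $2^{-\ell}(f(1)-f(0))+\sum_{j=1}^{\ell}(\pm)\,2^{-(\ell-j)-1}\delta_{j,\cdot}$. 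Substituting this, applying Minkowski's inequality in the scale index $j$ and then Young's convolution inequality (the scale kernel $2^{-(\ell-j)(1-s)}\mathbf{1}_{\{j\le\ell\}}$ is summable since $s<1$) yields (b); the $f(1)-f(0)$ term contributes only the convergent factor $\sum_{\ell\ge0}2^{\ell p(s-1)}$, again by $s<1$.

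Estimate (c) uses that the Faber--Schauder expansion $f = f(0)+(f(1)-f(0))\,t+\sum_{j\ge0}\sum_{m=0}^{2^j-1}c_{j,m}\phi_{j,m}$ converges uniformly (hence in $L^p$) for every continuous $f$, its partial sums being the dyadic piecewise-linear interpolants of $f$, and that $c_{j,m}=-\tfrac12\big(f(\tfrac{m}{2^j})-2f(\tfrac{2m+1}{2^{j+1}})+f(\tfrac{m+1}{2^j})\big)$ is, up to the constant $-\tfrac12$, a level-$(j+1)$ second difference of $f$. Since the renormalized hat $2^{-j(s-1/p)-1}\phi_{j,m}$ satisfies the support, supremum and Lipschitz bounds of Definition~\ref{def: Lip-atom} at scale $j$, and the affine part lies in $B^s_{p,p}([0,1];E)$ with norm $\lesssim\|f(0)\|+\|f(1)-f(0)\|$ (or is absorbed into coarse $1_K$-atoms), this expansion is an admissible Lip-atomic representation as in~\eqref{Lip-atomic representation of Besov functions on unit interal} whose coefficient sequence has $b_{p,p}$-norm $\sim\|f\|_{b^s_{p,p},(2)}$; the synthesis half of Proposition~\ref{prop: Lip-atomic representation of Besov functions} (applied on $\R$ to the extended atoms, then restricted to $(0,1)$) gives $\|f\|_{B^s_{p,p}}\lesssim\inf\|\lambda\,|\,b_{p,p}\|\lesssim\|f\|_{b^s_{p,p},(2)}$.

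Estimate (c$'$) is the crux: $\|f\|_{b^s_{p,p},(2)}$ only sees $f$ on a Lebesgue-null set and cannot be controlled by the integral norm through a direct Riemann-sum estimate, so I would again pass through the atomic decomposition. Fixing an almost-optimal Lip-atomic representation $f=\sum_{j,m}\lambda_{j,m}a_{j,m}$ with $\|\lambda\,|\,b_{p,p}\|\lesssim\|f\|_{B^s_{p,p}}$, I would estimate the dyadic second differences of a single Lip-atom $a$ at scale $j$ against a mesh level $\ell$: for $\ell\ge j$ use the Lipschitz bound of $a$, and for $\ell<j$ use the supremum bound together with the fact that $\mathrm{supp}\,a$ meets only $O(1)$ mesh cells, obtaining in both regimes $2^{\ell p(s-1/p)}\sum_k\|\delta_{\ell,k}a\|^p\lesssim 2^{-\gamma|\ell-j|}$ for some $\gamma=\gamma(s,p)>0$; here the regime $\ell\ge j$ uses $s<1$ and the regime $\ell<j$ uses $s>1/p$, which is exactly where both endpoints of the hypothesis on $s$ enter. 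The passage from atoms to $f$ is the standard almost-orthogonality bookkeeping --- for a fixed mesh cell only $O(1)$ atoms of a given scale are active, an atom at scale $j$ is active in $O(2^{(\ell-j)_+})$ level-$\ell$ cells, and the product of this overlap count with the per-atom weight still decays geometrically in $|\ell-j|$ --- followed by Minkowski's inequality in the mesh index and Young's convolution inequality in the scale index, giving $\|f\|_{b^s_{p,p},(2)}\lesssim\|\lambda\,|\,b_{p,p}\|\lesssim\|f\|_{B^s_{p,p}}$; the same computation for first differences gives $\|f\|_{b^s_{p,p},(1)}\lesssim\|f\|_{B^s_{p,p}}$, though this is already subsumed by the chain (a)--(c$'$). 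I expect (c$'$), and specifically the two-regime per-atom estimate and its bookkeeping, to be the only genuinely delicate point; the Banach-space structure of $E$ enters only via Proposition~\ref{prop: Lip-atomic representation of Besov functions}, which needs neither UMD nor separability, so the whole scheme is indifferent to the geometry of $E$.
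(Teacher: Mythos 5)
Your proposal is correct and follows essentially the same route as the paper: the discrete--discrete equivalence (your (a)--(b)) is exactly the Rosenbaum argument the paper cites in its Step~1, your (c) is the paper's Step~2 (Faber--Schauder expansion viewed as an admissible Lip-atomic representation, then the synthesis half of Proposition~\ref{prop: Lip-atomic representation of Besov functions}), and your (c$'$), including the two-regime per-atom estimate of the second-difference coefficients and the finite-overlap bookkeeping, is the paper's Step~3 following Kabanava. The only difference is presentational --- you write out the dyadic-increment identities and the convolution-kernel decay explicitly where the paper defers to \cite{Rosenbaum2009} and \cite{Kabanava2012} --- so there is nothing substantive to correct.
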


\begin{proof}
  The proof is done in three steps.\smallskip
  
  \noindent\textit{Step~1:} The first observation is that Rosenbaum's elementary proof for 
  \[
    b_{p,q,(1)}^s([0,1];E) = b_{p,q,(2)}^s([0,1];E)
  \]
  generalizes line by line to the vector valued case and even for general $p$ and $q$. For detailed arguments we refer to the proof of Theorem~1 on page 58--59 in \cite{Rosenbaum2009}. Hence, it is sufficient to show
  \[
    B_{p,p}^s([0,1];E) = b_{p,(2)}^s([0,1];E),
  \]
  which will be done in the next two steps. \smallskip
 
  \noindent\textit{Step~2:} We know by Proposition~\ref{prop: Lip-atomic representation of Besov functions} that vector valued Besov spaces on $\mathbb{R}$ allow for Lip-atomic characterizations of their norms, which by restriction to $[0,1]$ holds also on the unit interval, see Subsection~\ref{subsec:restriction to internval}.
  
  Now, we define for $i \in \N_0$ that $V_i := \{\frac{k}{2^i}:k = 0,\dots,2^i\}$. For every $\xi \in V_{i}\setminus V_{i-1}$ ($i\ge 1$) such that $\xi = \frac{2m + 1}{2^i}$ for some $m = 0,1,\dots,2^{i-1}-1$, we define a function $\psi^i_\xi(t)$ as follows:
  $$
    \psi^j_\xi(t) :=
    \begin{cases}
      2^i(t - \frac{m}{2^{i-1}}),& \text{if } \frac{m}{2^{i-1}} \le t < \xi, \\
      2^i(\frac{m+1}{2^{i-1}} - t),& \text{if } \xi \le t < \frac{m+1}{2^{i-1}}, \\
      0,& \text{otherwise}.
    \end{cases}
  $$
  Then one can verify that $2^{-i(s-1/p)}\psi^j_\xi(t)$, $i \ge 1$, $\xi \in V_i\setminus V_{i-1}$ are real-valued Lip-atoms according to Definition~\ref{def: Lip-atom}. Furthermore, for $i = 0$ we define
  $$
    \psi^0_0(t) := 1\quad \text{and} \quad \psi^0_1(t) := t
  $$
  for $t \in [0,1]$. Clearly, they are also real-valued Lip-atoms. 
  
  Next, fix an $f \in b^s_{p,(2)}([0,1];E)$. For $i \ge 1$ and $\xi = \frac{2m+1}{2^i} \in V_i \setminus V_{i-1}$ we define
  $$
    \lambda_{i,\xi} := 2^{i(s-\frac{1}{p})}\Big(-f(\frac{m+1}{2^{i-1}}) + 2f(\xi) - f(\frac{m}{2^{i-1}})\Big),
  $$
  and $\lambda_{0,0} := f(0)$, $\lambda_{0,1} := f(1)-f(0)$. Using the convention that $\frac{0}{0} = 0$, we can immediately see that
  $$
    a_{i,\xi}(t) := \frac{2^{-i(s-\frac{1}{p})}\psi^j_\xi(t)\lambda_{i,\xi}}{\|\lambda_{i,\xi}\|}
  $$
  for $i \ge 1$, $\xi \in V_i\setminus V_{i-1}$ and $a_{0,0}(t) = \frac{\psi^0_0(t)\lambda_{0,0}}{\|\lambda_{0,0}\|}$, $a_{0,1}(t) = \frac{\psi^0_1(t)\lambda_{0,1}}{\|\lambda_{0,1}\|}$ define a family of $E$-valued Lip-atoms in the sense of Definition~\ref{def: Lip-atom}. Since $f$ is continuous, $f(t)$ can be represented as:
  \begin{equation}\label{equation: Lip-atomic representation of f via Faber-Schauder basis}
    f(t) = \|\lambda_{0,0}\|a_{0,0}(t) + \|\lambda_{0,1}\|a_{0,1}(t) + \sum_{i = 1}^\infty\sum_{\xi \in V_i \setminus V_{i-1}}\|\lambda_{i,\xi}\|a_{i,\xi}(t),
  \end{equation}
  which in fact corresponds to the linear interpolation of~$f$ on dyadic points inside the interval~$[0,1]$ and therefore the convergence happens in $L^\infty([0,1];E)$ (and hence in $L^p([0,1];E)$ for all $1 \le p \le  \infty$). Moreover, it is straightforward to check that with $\lambda = (\|\lambda_{i,\xi}\|)$, 
  $$
    \|\lambda|b_{p,p}\| = \|f\|_{b^s_{p,(2)}} < \infty.
  $$
  Hence, by Proposition~\ref{prop: Lip-atomic representation of Besov functions} we can conclude that $f \in B^s_{p,p}([0,1];E)$ and, as the formula~\eqref{equation: Lip-atomic representation of f via Faber-Schauder basis} gives a special Lip-atomic representation of $f$, it holds that 
  \begin{equation}\label{equation: Sobolev norm is bounded by sequence norm}
    \|f\|_{B^s_{p,p}} \sim \inf \|\tilde{\lambda}| b_{p,p}\| \le \|\lambda | b_{p,p}\| = \|f\|_{b^s_{p,(2)}},
  \end{equation}
  where the infimum is taken over all admissible Lip-atomic representations of 
  $$
    f = \sum_{i=0}^\infty \sum_{m=0}^{2^i}\tilde{\lambda}_{i,m}\tilde{a}_{i,m}
  $$
  given as in the formula~\eqref{Lip-atomic representation of Besov functions on unit interal}.
  \smallskip
  
  \noindent\textit{Step~3:}
  In this step we will establish the converse of the inequality~\eqref{equation: Sobolev norm is bounded by sequence norm}, that is, we want to show that if~$f$ belongs to $B^s_{p,p}([0,1];E)$, then $f \in b^s_{p,(2)}([0,1];E)$ and 
  \begin{equation}\label{equation: sequence norm is bounded by Sobolev norm}
    \|f\|_{b^s_{p,(2)}} \lesssim \|f\|_{B^s_{p,p}},
  \end{equation}
  where the proportional constant is independent of $f$. Towards this end, we can assume $1<p<\infty$ and follow similar arguments as in the proof of Theorem~5.1 in the work of Kabanava~\cite{Kabanava2012}. For sake of brevity we will only state here the essential steps. 
  
  First, given a $f \in B^s_{p,p}([0,1];E)$, by Proposition~\ref{prop: Lip-atomic representation of Besov functions} we can find a Lip-atomic representation of~$f$:
  \[
    f = \sum_{j = 0}^\infty \sum_{m=0}^{2^j} \lambda_{j,m} a_{j,m}
  \] 
  whose (real) coefficients $ \lambda = (\lambda_{j,m})_{j\ge 0,m=0,\dots,2^{j}}$ satisfy
  \[
    \| \lambda | b_{p,p}\| \lesssim {\| f \|}_{B^s_{p,p}} \, .
  \]
  The idea is to expand the $E$-valued atoms $ a_{j,m} $ with respect to the Faber-Schauder basis $\psi^j_{\xi}(t)$ defined as in the Step~2. More precisely, we denote by $ c_{\xi}(f) $ the coefficient of the ($L^1$-normalized) basis element $ \psi^j_{\xi}$ centered at a dyadic $ \xi \in V_{j+1}\setminus V_{j}$ for $ j \geq 0 $, i.e.~$ c_0(f) = f(0) $, $ c_1(f) = f(1)-f(0) $,
  \[
    c_{(2m+1)/2^{j+1}}(f) = \Big( - f\big(\frac{m}{2^{j}}\big) + 2 f \big(\frac{2m+1}{2^{j+1}}\big) - f\big(\frac{m+1}{2^{j}}\big) \Big)
  \]
  for $m =0,\ldots,2^j $ and $ j \geq 0 $. Then since $s > 1/p$ one can follow the proof on page 196--197 in \cite{Kabanava2012} to obtain that
  \[
    c_{\xi}(f) : = \sum_{j=0}^\infty \sum_{m=0}^{2^j} \lambda_{j,m} c_{\xi}(a_{j,m})
  \]
  is well-defined due to uniform (and unconditional) convergence of $ \sum_{j=0}^\infty \sum_{m=0}^{2^j-1} \lambda_{j,m} a_{j,m}$ to~$f$. Furthermore, from the support and Lipschitz properties of Lip-atoms $a_{j,m}$ one can check that for every $i \in \N$ and every $\xi \in V_i \setminus V_{i-1}$,
  \begin{equation}\label{equation: estimate for atoms with j > i}
    \|c_{\xi}(a_{j,m}) \| \leq 2 \times 2^{-j(s-\frac{1}{p})}
  \end{equation}
  for all $j > i$, $m=0,\ldots,2^j$,
  \begin{equation}\label{equation: estimate for atoms with j < i}
    \| c_{\xi}(a_{j,m}) \| \leq 2^{-i} 2^{-j(s-1-\frac{1}{p})}
  \end{equation}
  for all $j\le i$, $m=0,\ldots,2^j$.
   Then, following~\cite{Kabanava2012} on page~197, we split
  \[
    c_{\xi}(f) = \Big( \sum_{j=0}^i + \sum_{j=i+1}^\infty \Big)(\sum_{m=0}^{2^{j}} \lambda_{j,m} c_{\xi}(a_{j,m})) =: x_{\xi}(f) + y_{\xi}(f).
  \]
  We estimate first
  \[
    X_{i,p,s}(f) := 2^{i(s-\frac{1}{p})}{\Big( \sum_{\xi \in V_i \setminus V_{i-1}} {\|x_{\xi}(f)\|}^p \Big)}^{\frac{1}{p}} \, .
  \]
  Due to the localization properties of the atoms $a_{j,m}$, we obtain that
  \begin{equation}\label{the cardinality of non-vanishing atoms}
    \# \{ \xi \in V_i \setminus V_{i-1} \, | \; c_{\xi} (a_{j,m}) \neq 0 \} \lesssim 2^{-(j-i)}
  \end{equation}
  for $ i \geq 1 $, $ i \ge j \geq 0 $.  Therefore, by exploiting~\eqref{equation: estimate for atoms with j < i} and \eqref{the cardinality of non-vanishing atoms} together with Minkowski's and H\"older's inequalities, we arrive at
  \begin{align*}
    \Big(\sum_{\xi \in V_i \setminus V_{i-1}} {\|x_{\xi}(f)\|}^p \Big)^{\frac{1}{p}}&\le \sum_{j=0}^i\Big( \sum_{\xi \in V_i\setminus V_{i-1}} \Big(\sum_{m=1}^{2^j}|\lambda_{j,m}|\|c_\xi(a_{j,m})\|\Big)^p \Big)^{\frac{1}{p}} \\
    & \le \sum_{j=0}^i \sum_{m=1}^{2^j} \Big(\sum_{\xi \in V_i\setminus V_{i-1}}|\lambda_{j,m}|^p\|c_\xi(a_{j,m})\|^p\Big)^{\frac{1}{p}} \\
    &\lesssim\sum_{j=0}^i\sum_{m=1}^{2^j}|\lambda_{j,m}|2^{-i}2^{-j(s-1-\frac{1}{p})}2^{-\frac{(j-i)}{p}} \\
    &\lesssim 2^{-i(1-\frac{1}{p})}\sum_{j=0}^i 2^{-j(s-1)}\Big(\sum_{m=1}^{2^j}|\lambda_{j,m}|^p\Big)^{\frac{1}{p}}.
  \end{align*}
  An application of Jensen's inequality leads to 
  \[
    {\Big ( \sum_{i \geq 1} X_{i,p,s}^p(f) \Big )}^{\frac{1}{p}} \lesssim \| \lambda| b_{p,p} \| \, .
  \]
  For more details regarding this estimate we refer readers to the proof of \cite[Theorem~5.1]{Kabanava2012}.
  
  Analogously we define
  \[
    Y_{i,p,s}(f) := 2^{i(s-\frac{1}{p})}{\Big( \sum_{\xi \in V_i \setminus V_{i-1}} {\|y_{\xi}(f)\|}^p \Big)}^{\frac{1}{p}} \, .
  \]
  This time we can use~\eqref{equation: estimate for atoms with j > i} and~\eqref{the cardinality of non-vanishing atoms} to obtain that 
  \[
    {\Big ( \sum_{i \geq 1} Y_{i,p,s}^p(f) \Big )}^{\frac{1}{p}} \lesssim \| \lambda | b_{p,p} \| \, .
  \]
  Therefore, by unconditional convergence of the respective series, we have
  \begin{align*}
    {\| f \|}_{b^s_{p,(2)}} &= \Big(\sum_{i = 1}^\infty 2^{i(s-\frac{1}{p})p}\sum_{\xi \in V_i \setminus V_{i-1}}\|c_\xi(f)\|^p\Big)^{\frac{1}{p}} \\
    &\leq {\Big ( \sum_{i \geq 1} X_{i,p,s}^p(f) \Big )}^{\frac{1}{p}} + {\Big ( \sum_{i \geq 1} Y_{i,p,s}^p(f) \Big )}^{\frac{1}{p}} \lesssim\| \lambda | b_{p,p}\| \lesssim {\| f \|}_{B^s_{p,p}},
  \end{align*}
  which gives the inequality~\eqref{equation: sequence norm is bounded by Sobolev norm}. Now a slight modification of the above proof leads to the corresponding result for the case $p = \infty$ and completes the proof.
\end{proof}

Now we turn to the general Besov spaces $B^s_{p,q}([0,1];E)$. If $E = \R$, then for fixed $s$ and $p$, we can identify the Besov space $B^s_{p,q}([0,1];\R)$ with the real interpolation space 
$$
  (B^{s_0}_{p,p}([0,1];\R), B^{s_1}_{p,p}([0,1];\R))_{\theta,q}
$$
for any $q \in [1,\infty]$, as long as $s_0 \neq s_1$ fulfills that $s = (1-\theta)s_0 + \theta s_1$ for some $\theta \in (0,1)$. We refer the reader to \cite[Section~2.4.2]{triebel2010} for more details about the real interpolation method applied to scalar valued Besov spaces. As pointed out in \cite[Section~3]{Amann2000}, the same properties hold also for vector valued Besov spaces, namely
$$
  (B^{s_0}_{p,p}([0,1];E), B^{s_1}_{p,p}([0,1];E))_{\theta,q} = B^s_{p,q}([0,1];E)
$$
for all $q \in [1,\infty]$ and $s_0 \neq s_1$ as above. This observation will allow us to derive the equivalences of the above three norms on all Besov spaces $B^s_{p,q}([0,1];E)$ from the corresponding results obtained in Proposition~\ref{prop: equivalence of three norms for Sobolev spaces} for Sobolev spaces. However, for this purpose we also need the following notions of sequence spaces:

\begin{definition}
  Let $X$ be a Banach space, $\sigma \in \R$ and $p \in (0,\infty]$. The Banach space $l^\sigma_p(X)$ is defined via
  $$
    l^\sigma_p(X):= \bigg\{\xi = (\xi_j)_{j \ge 0}\, :\, \xi_j \in X \text{ for all } j \ge 0,\, \|\xi\|_{l^\sigma_p(X)} := \Big(\sum_{j=0}^\infty (2^{\sigma j}\|\xi_j\|)^p\Big)^{\frac{1}{p}} < \infty\bigg \}.
  $$
\end{definition}

Note that the scalar counterpart of $l^\sigma_p$ was introduced by Triebel in \cite{Triebel1973}. For the later use we need the following lemma:

\begin{lemma}\label{lem: embedding result of interpolation weighted lp spaces}
  Let $\sigma_0,\sigma_1$ be two real numbers with $\sigma_0 \neq \sigma_1$, $p,q\in[1,\infty]$ and $X$ be a Banach space. Then, it holds that for all $\theta \in (0,1)$ that
  $$
    (l^{\sigma_0}_p(X),l^{\sigma_1}_p(X))_{\theta,q} \subset l^\sigma_q(X)
  $$ 
  with $\sigma = (1-\theta)\sigma_0 + \theta \sigma_1$, where $(l^{\sigma_0}_p(X),l^{\sigma_1}_p(X))_{\theta,q}$ denotes the real interpolation space of $l^{\sigma_0}_p(X)$ and $l^{\sigma_1}_p(X)$ with respect to $(\theta,q)$, and the inclusion ``$\subset$'' means that 
  $$
    \|\xi\|_{l^\sigma_q(X)} \lesssim \|\xi\|_{(l^{\sigma_0}_p(X),l^{\sigma_1}_p(X))_{\theta,q}}
  $$
  for all $\xi \in (l^{\sigma_0}_p(X),l^{\sigma_1}_p(X))_{\theta,q}$.
\end{lemma}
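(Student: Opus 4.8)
The plan is to bound the $K$-functional of the couple $\big(l^{\sigma_0}_p(X),l^{\sigma_1}_p(X)\big)$ from below coordinate-wise and then read off the asserted embedding by the usual discretization of the $K$-method integral. A useful observation for the write-up is that the value of $p$ on the left-hand side plays no role: the only feature that will be used is that the $l^{\sigma_i}_p(X)$-norm dominates each weighted coordinate $2^{\sigma_i j}\|\cdot\|_X$. Consequently the argument runs uniformly over $p\in[1,\infty]$, and it is in substance the $X$-valued version of the scalar fact going back to Triebel~\cite{Triebel1973}.

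The first step is the elementary estimate that for every $t>0$, every $\xi=(\xi_j)_{j\ge 0}$ and every $j\ge 0$,
$$K\big(t,\xi;l^{\sigma_0}_p(X),l^{\sigma_1}_p(X)\big)\;\ge\;\min\!\big(2^{\sigma_0 j},\,t\,2^{\sigma_1 j}\big)\,\|\xi_j\|_X,$$
since any decomposition $\xi=\eta+\zeta$ satisfies $\|\eta\|_{l^{\sigma_0}_p(X)}+t\|\zeta\|_{l^{\sigma_1}_p(X)}\ge 2^{\sigma_0 j}\|\eta_j\|_X+t\,2^{\sigma_1 j}\|\zeta_j\|_X\ge \min(2^{\sigma_0 j},t\,2^{\sigma_1 j})\|\xi_j\|_X$ by the triangle inequality in $X$. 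Assuming without loss of generality $\sigma_0>\sigma_1$ and writing $\delta:=\sigma_0-\sigma_1>0$, one picks for each $j\ge 0$ the exponent $k(j):=\lfloor\delta j\rfloor\in\mathbb Z$, so that $2^{k(j)}\le 2^{\delta j}<2^{k(j)+1}$; then $\min(2^{\sigma_0 j},2^{k(j)}2^{\sigma_1 j})=2^{k(j)}2^{\sigma_1 j}\ge 2^{\sigma_0 j-1}$ while $2^{-\theta k(j)}\ge 2^{-\theta\delta j}$, whence, using $\sigma=(1-\theta)\sigma_0+\theta\sigma_1=\sigma_0-\theta\delta$,
$$2^{-\theta k(j)}K\big(2^{k(j)},\xi\big)\;\ge\;\tfrac12\,2^{(\sigma_0-\theta\delta)j}\|\xi_j\|_X\;=\;\tfrac12\,2^{\sigma j}\|\xi_j\|_X.$$

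It remains to sum over $j$. The preimage $\{j\ge 0:k(j)=k\}$ is contained in an interval of length $1/\delta$, so the map $j\mapsto k(j)$ is at most $N$-to-one with $N:=\lfloor 1/\delta\rfloor+1$. Combining this with the (one-sided) discretization $\|\xi\|_{(l^{\sigma_0}_p(X),l^{\sigma_1}_p(X))_{\theta,q}}^q=\int_0^\infty\big(t^{-\theta}K(t,\xi)\big)^q\frac{\dd t}{t}\gtrsim \sum_{k\in\mathbb Z}\big(2^{-\theta k}K(2^k,\xi)\big)^q$, which follows from the monotonicity of $t\mapsto K(t,\xi)$, one obtains
$$\|\xi\|_{l^\sigma_q(X)}^q=\sum_{j\ge 0}\big(2^{\sigma j}\|\xi_j\|_X\big)^q\le 2^q\sum_{j\ge 0}\big(2^{-\theta k(j)}K(2^{k(j)},\xi)\big)^q\le 2^q N\sum_{k\in\mathbb Z}\big(2^{-\theta k}K(2^k,\xi)\big)^q\lesssim \|\xi\|_{(l^{\sigma_0}_p(X),l^{\sigma_1}_p(X))_{\theta,q}}^q,$$
which is the claim. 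The case $q=\infty$ is handled identically with the sums over $j$ and $k$ replaced by suprema, in which case the multiplicity bound $N$ is not even needed, and $p=\infty$ requires no change. I do not expect a genuine obstacle here: the only informative move is the choice of scale $t\sim 2^{(\sigma_0-\sigma_1)j}$, at which the two competing weights $2^{\sigma_0 j}$ and $t\,2^{\sigma_1 j}$ in the $K$-functional balance; the rest is the routine bookkeeping of the dyadic discretization together with the finite multiplicity of $j\mapsto k(j)$, and keeping the endpoint conventions for $p$ or $q$ equal to $\infty$ straight.
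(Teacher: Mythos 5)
Your proof is correct and follows essentially the same route as the paper's: both arguments bound the $K$-functional of the couple from below by the single coordinate $\min(2^{\sigma_0 j}, t\,2^{\sigma_1 j})\|\xi_j\|_X$, evaluate at the balancing scale $t\sim 2^{(\sigma_0-\sigma_1)j}$, and discretize the $K$-method integral (the paper partitions $(1,\infty)$ into intervals of width $2^{\sigma_0-\sigma_1}$ so that $j=k$ exactly, whereas you use the standard dyadic grid and absorb the finite multiplicity of $j\mapsto\lfloor(\sigma_0-\sigma_1)j\rfloor$ into the constant --- a purely cosmetic difference). The handling of the endpoint cases $p=\infty$ and $q=\infty$ is also treated the same way in both.
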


\begin{proof}
  The proof is inspired by Step~1 of the proof of \cite[Lemma~8.2.1]{Triebel1973}, where the scalar case $X = \R$ was considered. For the sake of notational simplicity we write $\| \cdot \|_{\theta,q}$ instead of $\| \cdot \|_{(l^{\sigma_0}_p(X),l^{\sigma_1}_p(X))_{\theta,q}}$ and we define $K(t;\xi) := K(t;\xi,l^{\sigma_0}_p(X),l^{\sigma_1}_p(X))$, which is called Peetre's $K$-functional, implicitly by
  $$
    \|\xi\|_{\theta,q} = \Big(\int_0^\infty t^{-\theta q} K^q(t;\xi)\, \frac{\d t}{t}\Big)^{\frac{1}{q}}
  $$
  for $\xi \in (l^{\sigma_0}_p(X),l^{\sigma_1}_p(X))_{\theta,q}$. Fix a $\xi = (\xi_j)_{j\ge 0}$ in $(l^{\sigma_0}_p(X),l^{\sigma_1}_p(X))_{\theta,q}$. Then we have for $1 \le p,q < \infty$ that
  \begin{align*}
    K^p(t;\xi) &= \inf_{\xi = \xi^0 + \xi^1; \xi^i \in l^{\sigma_i}_p(X), i=0,1}\{\|\xi^0\|_{l^{\sigma_0}_p} + t\|\xi^1\|_{l^{\sigma_1}_p}\}^p \\
    & \sim \inf_{\xi = \xi^0 + \xi^1; \xi^i \in l^{\sigma_i}_p(X), i=0,1}\{\|\xi^0\|_{l^{\sigma_0}_p}^p + t^p\|\xi^1\|_{l^{\sigma_1}_p}^p\} \\
    &= \inf_{\xi = \xi^0 + \xi^1; \xi^i \in l^{\sigma_i}_p(X), i=0,1}\Big\{\sum_{j=0}^\infty 2^{\sigma_0 j p}\|\xi^0_j\|^p + t^p\sum_{j=0}^\infty 2^{\sigma_1 j p}\|\xi^1_j\|^p\Big\} \\
    &\ge \inf_{\xi = \xi^0 + \xi^1; \xi^i \in l^{\sigma_i}_p(X), i=0,1}\Big\{\sum_{j=0}^\infty \min(2^{\sigma_0 j p},2^{\sigma_1 j p}t^p)\Big(\|\xi^0_j\|^p + \|\xi^1_j\|^p\Big)\Big\}\\
    &\gtrsim  \sum_{j=0}^\infty \min(2^{\sigma_0 j p},2^{\sigma_1 j p}t^p)\|\xi_j\|^p
  \end{align*}
  Because $\sigma_0 \neq \sigma_1$, we may assume that $\sigma_0 > \sigma_1$ and divide the interval $(0,\infty)$ into parts $[2^{(k-1)(\sigma_0 - \sigma_1)}, 2^{k(\sigma_0 - \sigma_1)})$, $k \in \mathbb{Z}$. As a consequence we have the following estimates:
  \begin{align*}
    \|\xi\|_{\theta,q}^q &= \int_0^\infty t^{-\theta q} K^q(t;\xi)\, \frac{\d t}{t} 
    \ge \int_1^\infty t^{-\theta q} K^q(t;\xi) \,\frac{\d t}{t} \\
    &\ge \sum_{k=1}^\infty 2^{-\theta q k(\sigma_0 - \sigma_1)}K^q(2^{(k-1)(\sigma_0 - \sigma_1)};\xi)2^{-k(\sigma_0 - \sigma_1)}(2^{k(\sigma_0 - \sigma_1)} -2^{(k-1)(\sigma_0 - \sigma_1)} ) \\
    &\gtrsim\sum_{k=0}^\infty 2^{-\theta q k(\sigma_0 - \sigma_1)}\Big(\sum_{j=0}^\infty \min(2^{\sigma_0 j p}, 2^{\sigma_1 j p + k(\sigma_0 -\sigma_1)p})\|\xi_j\|^p\Big)^{\frac{q}{p}}\\
    &\gtrsim \sum_{k=0}^\infty 2^{qk\sigma}\|\xi_k\|^q = C\|\xi\|^q_{l^\sigma_q},
  \end{align*}
  where we only consider the term with $j = k$ in the last inequality. A slight modification gives the desired embedding result also for the cases $p = \infty$ and/or $q = \infty$.
\end{proof}

\begin{remark}
  If $1<p,q<\infty$ and $X$ is a \textit{reflexive} Banach space, the weighted sequence spaces $l^{\sigma_0}_p(X)$ and $l^{\sigma_1}_p(X)$ are reflexive Banach spaces and therefore one can show that 
  $$
    (l^\sigma_q(X))^\prime = l^{-\sigma}_{q^\prime}(X^\prime)
  $$
  with $X^\prime$ being the dual space of $X$ and $\frac{1}{q} + \frac{1}{q^\prime} = 1$. Then, following the same lines as in the Step~3 of the proof for \cite[Lemma~8.2.1]{Triebel1973}, we can use Lemma~\ref{lem: embedding result of interpolation weighted lp spaces} to get that
  $$
    l^\sigma_q(X) = \Big(l^{-\sigma}_{q^\prime}(X^\prime)\Big)^\prime \subset \Big((l^{-\sigma_0}_{p^\prime}(X^\prime),l^{-\sigma_1}_{p^\prime}(X^\prime))_{\theta,q^\prime}\Big)^\prime = (l^{\sigma_0}_p(X),l^{\sigma_1}_p(X))_{\theta,q},
  $$
  and thus we even have $l^\sigma_q(X) = (l^{\sigma_0}_p(X),l^{\sigma_1}_p(X))_{\theta,q}$ in this case. However, remember that in this paper we are mainly working on the Banach spaces, as e.g. $E = c_0$, which are not reflexive, the spaces $L^p(D,\mu;E)$ fail to be reflexive as well for any $\sigma$-finite measure space $(D,\mu)$; as a result, for $X:= L^p(D,\mu;E)$ which will be used in the next theorem, we may \textit{not} have the equality $l^\sigma_q(X) = (l^{\sigma_0}_p(X),l^{\sigma_1}_p(X))_{\theta,q}$. Fortunately only the embedding result proved in Lemma~\ref{lem: embedding result of interpolation weighted lp spaces} will be needed for establishing our next theorem.
\end{remark}

Finally, we are able to show the equivalence of the three norms on general Besov spaces $B^s_{p,q}([0,1];E)$ for $1/p<s<1$ and $p,q \in [1,\infty]$.

\begin{theorem}\label{thm: equivalence of three norms on Besov spaces}
  Let $E$ be a Banach space. Let $s\in (0,1)$ and $p,q\in [1,\infty]$ such that $1/p < s < 1$.
  Then, the three Besov norms $\|\cdot\|_{B^s_{p,q}}$, ${\|\cdot\|}_{b^s_{p,q},(1)}$ and  ${\|\cdot\|}_{b^s_{p,q},(2)}$ are equivalent, i.e., 
  \begin{equation*}
    \|f\|_{B^s_{p,q}} \sim   {\|f\|}_{b^s_{p,q},(1)}  \sim {\|f\|}_{b^s_{p,q},(2)} , \quad f\in B_{p,q}^s([0,1];E),
  \end{equation*}
  and, in particular, the three Banach spaces are isomorphic:
  \[
    B_{p,q}^s([0,1];E) = b_{p,q,(1)}^s([0,1];E) = b_{p,q,(2)}^s([0,1];E).
  \]
\end{theorem}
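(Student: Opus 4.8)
The plan is to reduce, exactly as in the proof of Proposition~\ref{prop: equivalence of three norms for Sobolev spaces}, to the single equivalence $\|\cdot\|_{B^s_{p,q}}\sim\|\cdot\|_{b^s_{p,q},(2)}$, and then to bootstrap it from the case $p=q$ by real interpolation. First, Rosenbaum's elementary argument giving $b^s_{p,q,(1)}([0,1];E)=b^s_{p,q,(2)}([0,1];E)$ is valid for all $p,q\in[1,\infty]$, this being Step~1 of the proof of Proposition~\ref{prop: equivalence of three norms for Sobolev spaces}, so it suffices to establish $B^s_{p,q}([0,1];E)=b^s_{p,q,(2)}([0,1];E)$ with equivalent norms. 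Moreover, one of the two inequalities is already at hand: Step~2 of the proof of Proposition~\ref{prop: equivalence of three norms for Sobolev spaces} carries over word for word with $b_{p,p}$ replaced by $b_{p,q}$. Indeed, for continuous $f$ the piecewise-linear interpolant of $f$ at the dyadic points gives the admissible $E$-valued Lip-atomic representation~\eqref{equation: Lip-atomic representation of f via Faber-Schauder basis} --- the atoms there only involve $s$ and $p$, the series converges in $L^\infty([0,1];E)$ and hence in $L^p([0,1];E)$, and the coefficient sequence $\lambda=(\|\lambda_{j,\xi}\|)$ satisfies $\|\lambda\,|\,b_{p,q}\|=\|f\|_{b^s_{p,q},(2)}$ --- so the Lip-atomic characterisation of $B^s_{p,q}([0,1];E)$ (Proposition~\ref{prop: Lip-atomic representation of Besov functions} restricted to $[0,1]$, see Subsection~\ref{subsec:restriction to internval} and~\eqref{Lip-atomic representation of Besov functions on unit interal}) yields $f\in B^s_{p,q}([0,1];E)$ with $\|f\|_{B^s_{p,q}}\lesssim\|f\|_{b^s_{p,q},(2)}$, in complete analogy with~\eqref{equation: Sobolev norm is bounded by sequence norm}.

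It thus remains to prove $\|f\|_{b^s_{p,q},(2)}\lesssim\|f\|_{B^s_{p,q}}$, and here I would interpolate. Since $1/p<s<1$, fix $s_0,s_1\in(1/p,1)$ with $s_0\neq s_1$ and $\theta\in(0,1)$ such that $s=(1-\theta)s_0+\theta s_1$; put $\sigma:=s-\tfrac1p$ and $\sigma_i:=s_i-\tfrac1p$, and let $X:=\ell^p(\mathbb{N};E)$. Consider the linear map $\mathcal{T}$ sending a continuous $f\colon[0,1]\to E$ to $\mathcal{T}f:=\big(f(0),\,f(1)-f(0),\,(\xi_j(f))_{j\ge1}\big)$, where $\xi_j(f)\in X$ carries the second-order differences $f(\tfrac{m}{2^{j-1}})-2f(\tfrac{2m+1}{2^j})+f(\tfrac{m+1}{2^{j-1}})$, $m=0,\dots,2^{j-1}-1$, in its first $2^{j-1}$ coordinates and zeros in the rest. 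Equipping the direct sums below with the $\ell^1$-sum norm, one reads off directly from~\eqref{eq:Besov second order differences} and the definition of the weighted sequence spaces $l^\sigma_q(X)$, $l^{\sigma_i}_p(X)$ (with the standard modifications if $p=\infty$ or $q=\infty$) that $\|\mathcal{T}f\|_{E\oplus E\oplus l^\sigma_q(X)}=\|f\|_{b^s_{p,q},(2)}$ and, likewise, $\|\mathcal{T}f\|_{E\oplus E\oplus l^{\sigma_i}_p(X)}=\|f\|_{b^{s_i}_{p,p},(2)}$ for $i=0,1$. By Step~3 of the proof of Proposition~\ref{prop: equivalence of three norms for Sobolev spaces}, precisely the estimate~\eqref{equation: sequence norm is bounded by Sobolev norm}, the map $\mathcal{T}$ is thus bounded from $B^{s_i}_{p,p}([0,1];E)$ into $E\oplus E\oplus l^{\sigma_i}_p(X)$ for $i=0,1$.

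Now apply the real interpolation functor $(\,\cdot\,,\,\cdot\,)_{\theta,q}$, which is exact and so commutes with finite direct sums: $\mathcal{T}$ is bounded from $\big(B^{s_0}_{p,p}([0,1];E),B^{s_1}_{p,p}([0,1];E)\big)_{\theta,q}$ into $(E,E)_{\theta,q}\oplus(E,E)_{\theta,q}\oplus\big(l^{\sigma_0}_p(X),l^{\sigma_1}_p(X)\big)_{\theta,q}$. Here the domain equals $B^s_{p,q}([0,1];E)$ by the vector-valued interpolation identity for Besov spaces recalled before Lemma~\ref{lem: embedding result of interpolation weighted lp spaces}, one has $(E,E)_{\theta,q}=E$ with equivalent norm, and Lemma~\ref{lem: embedding result of interpolation weighted lp spaces} applied to $X=\ell^p(\mathbb{N};E)$ gives $\big(l^{\sigma_0}_p(X),l^{\sigma_1}_p(X)\big)_{\theta,q}\subset l^\sigma_q(X)$ because $\sigma=(1-\theta)\sigma_0+\theta\sigma_1$. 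Chaining these, $\mathcal{T}$ is bounded from $B^s_{p,q}([0,1];E)$ into $E\oplus E\oplus l^\sigma_q(X)$, which is exactly $\|f\|_{b^s_{p,q},(2)}\lesssim\|f\|_{B^s_{p,q}}$. Together with the first paragraph this proves $B^s_{p,q}([0,1];E)=b^s_{p,q,(2)}([0,1];E)$ with equivalent norms, and combining with $b^s_{p,q,(1)}=b^s_{p,q,(2)}$ completes the argument.

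The point I expect to demand the most care is the \emph{direction} in which Lemma~\ref{lem: embedding result of interpolation weighted lp spaces} is invoked. Interpolation may be used only for ``$\|f\|_{b^s_{p,q},(2)}\lesssim\|f\|_{B^s_{p,q}}$'', where exactly the embedding $\big(l^{\sigma_0}_p(X),l^{\sigma_1}_p(X)\big)_{\theta,q}\subset l^\sigma_q(X)$ is needed, whereas the reverse inequality must be obtained non-interpolatively from the explicit Faber--Schauder/Lip-atomic representation. This asymmetry is unavoidable: since $E$, and hence $X=\ell^p(\mathbb{N};E)$, need not be reflexive (the motivating case being $E=c_0$), the reverse inclusion $l^\sigma_q(X)\subset\big(l^{\sigma_0}_p(X),l^{\sigma_1}_p(X)\big)_{\theta,q}$, i.e.\ the interpolation \emph{identity} for these weighted sequence spaces, generally fails. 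The remaining bookkeeping --- the zeroth-order terms $f(0),f(1)-f(0)$ (harmless, since $(E,E)_{\theta,q}=E$), the reconciliation of the $L^p$-normalised atomic norm on $[0,1]$ with the $\|f(0)\|$-normalised norm used in this section, and the standard modifications when $p$ or $q$ equals $\infty$ --- is routine.
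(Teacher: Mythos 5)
Your proposal is correct and follows essentially the same route as the paper: reduce to a single inequality via Steps~1--2 of Proposition~\ref{prop: equivalence of three norms for Sobolev spaces}, then obtain the remaining bound $\|f\|_{b^s_{p,q},(2)}\lesssim\|f\|_{B^s_{p,q}}$ by real interpolation between the $p=q$ endpoints, using the vector-valued identity $B^s_{p,q}=(B^{s_0}_{p,p},B^{s_1}_{p,p})_{\theta,q}$ together with the one-sided embedding of Lemma~\ref{lem: embedding result of interpolation weighted lp spaces}. The only (immaterial) differences are that the paper runs the interpolation through the \emph{first}-order difference sequence map and compares Peetre $K$-functionals directly, whereas you interpolate the \emph{second}-order difference operator $\mathcal{T}$ as a bounded linear map between the endpoint couples; your closing remark on why only the one-sided inclusion of Lemma~\ref{lem: embedding result of interpolation weighted lp spaces} is available (non-reflexivity of $E$) matches the paper's own discussion.
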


\begin{proof}
  Thanks to Proposition~\ref{prop: equivalence of three norms for Sobolev spaces}, we may implicitly assume that $p \neq q$. Moreover, noting that Step~1 and Step~2 in the proof of Proposition~\ref{prop: equivalence of three norms for Sobolev spaces} remains valid for $p \neq q$, we only need to show that if $f \in B^s_{p,q}([0,1];E)$, then $f \in b^s_{p,q,(1)}([0,1];E)$ and
  \begin{equation}\label{eq: sequence norm is bounded by Besov norm}
    \|f\|_{b^s_{p,q,(1)}} \lesssim \|f\|_{B^s_{p,q}}.
  \end{equation}
  Now let us fix $s \in (0,1)$ and $ p \in (1,\infty]$ such that $s > 1/p$; and we pick $1 > s_0 > s_1 > 1/p$ such that $s = (1-\theta)s_0 + \theta s_1$ for some $\theta \in (0,1)$. Given a $f \in B^s_{p,q}([0,1];E)$, one has
  $$
    f \in (B^{s_0}_{p,p}([0,1];E), B^{s_1}_{p,p}([0,1];E))_{\theta,q} = B^s_{p,q}([0,1];E)
  $$
  due to the real interpolation argument for vector valued Besov spaces. By Proposition~\ref{prop: equivalence of three norms for Sobolev spaces} we can deduce further that
  \begin{equation}\label{eq: interpolation of sequence spaces}
    f \in (b^{s_0}_{p,(1)}([0,1];E), b^{s_1}_{p,(1)}([0,1];E))_{\theta,q}.
  \end{equation}
  Furthermore, let $D$ be the collection of all dyadic numbers in $[0,1]$ and $\mu$ be the counting measure on $D$ such that $\mu(\{a\}) = 1$ for all $a \in D$. It is fairly easy to see that for any $E$-valued continuous function $g$ defined on $[0,1]$ and for any $1 > r > \frac{1}{p}$, $1\le p,q \le \infty$, the statement that $g \in b^r_{p,q,(1)}([0,1];E)$ is equivalent to the condition that $\xi(g) = (\xi(g)_j)_{j \ge 0}$ belongs to $l^{\tau}_q(X)$, where $X = L^p(D,\mu;E)$, $\tau = r - \frac{1}{p}$ and for each $j \ge 1$, $\xi(g)_j$ is a $E$-valued mapping defined on $D$ such that for $a \in D$,
  \begin{align*}
    \xi(g)_j(a) := 
    \begin{cases}
      g(\frac{m}{2^j}) - g(\frac{m+1}{2^j}),& \text{if } a = \frac{m}{2^j} \text{ for } m = 0,\ldots,2^j - 1,\\
      0,& \text{otherwise},
    \end{cases}
  \end{align*}
  and for $j = 0$,
  \begin{align*}
    \xi(g)_0(a) := 
    \begin{cases}
      g(0),& \text{for } a = 0,\\
      g(1) - g(0),& \text{for } a = 1,\\
      0,& \text{otherwise}.  
    \end{cases}
  \end{align*}
  Moreover, it holds that $\|g\|_{b^r_{p,q},(1)} = \|\xi(g)\|_{l^\tau_q(X)}$. Using this identification, one can verify the following estimates for two Peetre's $K$-functionals which we are interested in:
  \begin{align*}
    K(t;f,b^{s_0}_{p,(1)},b^{s_1}_{p,(1)}) &= \inf_{f = f_0 + f_1, f_i\in b^{s_i}_{p,p,(1)}, i =0,1}\{\|f_0\|_{b^{s_0}_{p,(1)}} + t \|f_1\|_{b^{s_1}_{p,(1)}}\} \\
    &= \inf_{f = f_0 + f_1, f_i\in b^{s_i}_{p,p,(1)}, i =0,1}\{\|\xi(f_0)\|_{l^{\sigma_0}_p(X)} + t \|\xi(f_1)\|_{l^{\sigma_1}_p(X)}\} \\
    &\ge \inf_{\xi = \xi^0 + \xi^1; \xi^i \in l^{\sigma_i}_p(X), i=0,1} \{\|\xi^0\|_{l^{\sigma_0}_p(X)} + t \|\xi^1\|_{l^{\sigma_1}_p(X)}\} \\
    &=K(t;\xi(f),l^{\sigma_0}_p(X), l^{\sigma_1}_p(X))
  \end{align*}
  for $\sigma_0 := s_0 - 1/p$ and $\sigma_1 := s_1 - 1/p$, because every decomposition $f = f_0 + f_1$ with $f_0 \in b^{s_0}_{p,p,(1)}([0,1];E)$ and $f_1 \in b^{s_1}_{p,p,(1)}([0,1];E)$ corresponds to a decomposition of $\xi(f) = \xi(f_0) + \xi(f_1)$ with $\xi(f_0) \in l^{\sigma_0}_p(X)$ and $\xi(f_1) \in l^{\sigma_1}_p(X)$ due to the above construction of $\xi(f)$. Hence, the condition~\eqref{eq: interpolation of sequence spaces} ensures that 
  \begin{align*}
    \infty > \|f\|_{(B^{s_0}_{p,p}([0,1];E), B^{s_1}_{p,p}([0,1];E))_{\theta,q}} &\sim \Big(\int_0^\infty t^{-\theta q}K^q(t;f,b^{s_0}_{p,(1)},b^{s_1}_{p,(1)})\, \frac{\d t}{t} \Big)^{\frac{1}{q}} \\
    &\ge \Big(\int_0^\infty t^{-\theta q}K^q(t;\xi(f),l^{\sigma_0}_p(X), l^{\sigma_1}_p(X))\, \frac{\d t}{t} \Big)^{\frac{1}{q}} \\
    &=\|\xi(f)\|_{\theta,q}.
  \end{align*}
  Now, invoking Lemma~\ref{lem: embedding result of interpolation weighted lp spaces} and the fact that 
  $$
    B^s_{p,q}([0,1];E) = (B^{s_0}_{p,p}([0,1];E), B^{s_1}_{p,p}([0,1];E))_{\theta,q},
  $$
  we have 
  $$
    \infty > \|f\|_{B^s_{p,q}} \ge \|\xi(f)\|_{\theta,q}  \gtrsim  \|\xi(f)\|_{l^\sigma_q(X)} = \|f\|_{b^s_{p,q},(1)},
  $$
  where $\sigma = (1-\theta)\sigma_0 + \theta \sigma_1 = s - 1/p$. Clearly this shows that $f \in b^s_{p,q,(1)}([0,1];E)$ and the estimate~\eqref{eq: sequence norm is bounded by Besov norm} holds true.
\end{proof}

\begin{remark}\label{rmk:proof main theorem}
  By using a Lipschitz embedding of the metric space $(E,d)$ into a Banach space, see Subsection~\ref{subsec:embeddings of metric spaces into Banach spaces}, Theorem~\ref{thm:Besov characterization} follows immediately from Theorem~\ref{thm: equivalence of three norms on Besov spaces} above.
\end{remark}

\section{Embedding results between Besov and p-variation spaces}\label{sec:embedding results}

This section is devoted to provide embedding results between Besov and p-variation spaces. For this purpose we start by introducing the corresponding norms restricted to subintervals.

Let $(E,\|\cdot\|)$ be a Banach space, $s\in (0,1)$ and $p,q\in [1,\infty)$. We call $\mathcal{P}$ a partition of an interval $[s,t]\subset [0,1]$ if $\mathcal{P}=\{[t_i,t_{i+1}]\,:\, s=t_0 < t_1<\dots <t_n=t,\, n\in \mathbb{N}\}$. The restricted semi-norms are defined as follows:
\begin{itemize}
  \item For $f\in B^{s}_{p,q}([0,1];E)$ we set 
        \begin{equation*}
          \|f\|_{s,p,q;[s,t]}:= \bigg(\int_{0}^{t-s} \bigg(\int_{s}^{t-h} \frac{\|f(u+h)-f(u)\|^{p}}{h^{s p}}\dd u\bigg)^{\frac{q}{p}} \frac{\dd h}{h}\bigg)^{\frac{1}{q}}.
        \end{equation*}
  \item For $f\in C^{p\var}([0,1];E)$ we set 
        \begin{equation*}
          \|f\|_{p\var;[s,t]}:=\bigg(\sup_{\mathcal{P}\subset [s,t]} \sum_{[u,v]\in \mathcal{P}} \|f(v)-f(u)\|^p \bigg)^{\frac{1}{p}},
        \end{equation*}
        where the supremum is taken over all partitions $\mathcal{P}$ of the interval $[s,t]$.
\end{itemize}
Recall that $\|\cdot\|_{s,p,q}=\|\cdot\|_{s,p,q;[0,1]}$ and $\|\cdot\|_{p\var}=\|\cdot\|_{p\var;[0,1]}$.

For a continuous function $f\colon[0,1]\to E$ the property of finite $p$-variation is equivalent to the existence of a control function $\omega\colon \{(s,t)\,:\,0\leq s\leq t\leq T\}\to [0,\infty)$ such that 
\begin{equation*}
  \|f(t)-f(s) \|^p \leq \omega(s,t) ,\quad s,t\in [0,1]\text{ with } s<t,
\end{equation*}
see for example \cite[Section~1.2]{Lyons2007}. Let us recall that a function $\omega\colon \{(s,t)\,:\,0\leq s\leq t\leq T\}\to [0,\infty)$ is called control function if $\omega(s,s)=0$ for $s\in[0,T]$ and $\omega$ is super-additive, i.e. $ \omega (s,t) +\omega (t,u) \leq \omega (s,u)$ for $s\leq t\leq u$ in $[0,T]$. 

The next proposition presents the embedding of Besov spaces into $p$-variation spaces. It can be seen as a generalization of \cite[Theorem~2]{Friz2006}.

\begin{proposition}\label{prop:variation embeddings}
  Suppose that $(E,\|\cdot\|)$ is a Banach space. Let $s \in (0,1)$ and $p,q\in [1,\infty)$ be such that $s > 1/p$. Set 
  \begin{equation*}
    \alpha:= s-\frac{1}{p},\quad \beta:= \big(s + ((q^{-1}-p^{-1})\wedge 0)\big)^{-1} \quad \text{and}\quad \gamma:=\frac{1}{s-\epsilon},
  \end{equation*}
  for $\epsilon \in (0,s-1/p)$.
  \begin{enumerate}
   \item If $q \ge p$, then the $\beta$-variation of a function $f\in B^{s}_{p,q}([0,1];E)$ is controlled by a constant multiple of the control function 
         \begin{equation*}
           \omega (s,t):= \|f\|^{\beta}_{s,p,q;[s,t]} (t-s)^{\alpha \beta}
         \end{equation*}
         and thus $B^{s}_{p,q}([0,1];E) \subset C^{\beta\var}([0,1];E)$.
   \item If $q \ge p$, then the $\gamma$-variation of a function $f\in B^{s}_{p,q}([0,1];E)$ is controlled by a constant multiple of the control function 
         \begin{equation*}
           \omega (s,t):= \|f\|^{\gamma}_{\gamma,p,p;[s,t]} (t-s)^{\alpha \gamma}
         \end{equation*}
         and thus $B^{s}_{p,q}([0,1];E) \subset C^{\gamma\var}([0,1];E)$.
   \item If $q \le p$, then the $\beta$-variation of a function $f\in B^{s}_{p,q}([0,1];E)$  is controlled by a constant multiple of the control function 
         \begin{equation*}
           \omega (s,t):= \|f\|^{\beta}_{s,p,p;[s,t]} (t-s)^{\alpha \beta},
         \end{equation*}
         and thus $B^{s}_{p,q}([0,1];E) \subset C^{\beta\var}([0,1];E)$.
  \end{enumerate}
  In particular, for $f\in B^{s}_{p,q}([0,1];E)$ one has 
  \begin{equation*}
    \|f\|_{\beta\var}\lesssim  \|f\|_{s,p,q}.
  \end{equation*}
\end{proposition}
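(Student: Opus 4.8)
\emph{Strategy.} The plan is, given an admissible $f$ (automatically continuous by Remark~\ref{rmk:besov embedding}), to produce a control function $\omega$ on $\{(u,v):0\le u\le v\le 1\}$ with the two properties: (a) $\|f(v)-f(u)\|^{\beta}\le C\,\omega(u,v)$ for all $u\le v$; and (b) $\omega$ is continuous, vanishes on the diagonal and is super-additive. The inclusion $f\in C^{\beta\var}([0,1];E)$ together with $\|f\|_{\beta\var}^{\beta}\le C\,\omega(0,1)$ then follows from the characterisation of finite $p$-variation by control functions recalled before the statement. I would write $\alpha:=s-1/p$ and put $q':=p\vee q$, so that $q'\ge p$ and $\beta=(s+1/q'-1/p)^{-1}$ \emph{in all cases} (when $q\le p$ one has $q'=p$ and $\beta=1/s$); in that case one first invokes the standard Besov embedding $B^s_{p,q}([0,1];E)\subset B^s_{p,p}([0,1];E)$, so that throughout it suffices to work with the restricted seminorm ${\|f\|}_{B^s_{p,q'};[u,v]}$. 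The candidate is
\[
  \omega(u,v):={\|f\|}^{\beta}_{B^s_{p,q'};[u,v]}\,(v-u)^{\alpha\beta}.
\]

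\emph{Step 1: the increment estimate.} First I would record the exact scaling identity obtained by the substitution $h=(v-u)h'$, $a=u+(v-u)a'$ in the defining integral, namely ${\|f\|}_{B^s_{p,q'};[u,v]}=(v-u)^{-\alpha}{\|g\|}_{B^s_{p,q'};[0,1]}$ with $g(\tau):=f(u+(v-u)\tau)$, $\tau\in[0,1]$. Applying the Garsia--Rodemich--Rumsey inequality to $g$ exactly as in Remark~\ref{rmk:besov embedding} produces a constant $C=C(s,p,q)$ with $\|g(1)-g(0)\|\le C{\|g\|}_{B^s_{p,q'};[0,1]}$, that is, $\|f(v)-f(u)\|\le C\,(v-u)^{\alpha}\,{\|f\|}_{B^s_{p,q'};[u,v]}$ for every sub-interval $[u,v]\subseteq[0,1]$; raising this to the power $\beta$ is precisely (a).

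\emph{Step 2: $\omega$ is a control function.} Set $A(u,v):={\|f\|}^{q'}_{B^s_{p,q'};[u,v]}$ and $B(u,v):=v-u$. Both are non-negative, continuous and vanish on the diagonal, and both are super-additive: trivially for $B$, and for $A$ by comparing the defining integrals on $[u,w]$, $[w,v]$ and $[u,v]$ for $u\le w\le v$. I would split the outer $h$-integral into the three ranges $h\le\min(w-u,v-w)$, $\min(w-u,v-w)<h\le\max(w-u,v-w)$ and $h>\max(w-u,v-w)$. On the first range the inner integral over $[u,v-h]$ dominates the sum of the inner integrals over the disjoint intervals $[u,w-h]$ and $[w,v-h]$, and since $q'\ge p$ the map $x\mapsto x^{q'/p}$ is super-additive, so the $[u,v]$-integrand dominates the sum of the two sub-integrands pointwise in $h$; on the other two ranges only one sub-integrand is present and it is dominated by monotonicity of the inner integral in the interval. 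Hence $A(u,w)+A(w,v)\le A(u,v)$. Now $\omega=A^{\lambda}B^{1-\lambda}$ with $\lambda:=\beta/q'$: here $\lambda\in(0,1)$ because $\beta<q'$ is equivalent to $s>1/p$, and $1-\lambda=\alpha\beta$ because $\beta(\alpha+1/q')=\beta(s+1/q'-1/p)=1$. Super-additivity of $\omega$ then follows from that of $A$ and $B$, the two-term H\"older inequality $a_1^{\lambda}b_1^{1-\lambda}+a_2^{\lambda}b_2^{1-\lambda}\le(a_1+a_2)^{\lambda}(b_1+b_2)^{1-\lambda}$, and monotonicity of $x\mapsto x^{\lambda}$ and $x\mapsto x^{1-\lambda}$.

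\emph{Conclusion and the main obstacle.} Combining Steps 1--2 with the control-function characterisation of finite $p$-variation yields $f\in C^{\beta\var}([0,1];E)$ and
\[
  {\|f\|}_{\beta\var}^{\beta}\le C^{\beta}\,\omega(0,1)=C^{\beta}\,{\|f\|}^{\beta}_{B^s_{p,q'};[0,1]}\le C^{\beta}\,{\|f\|}^{\beta}_{B^s_{p,q}},
\]
the last step being an equality when $q\ge p$ and the above Besov embedding when $q<p$; this is the final displayed inequality, and reading off $\omega$ gives statements (1) (where $q'=q$) and (3) (where $q'=p$). Statement (2) is a further refinement obtained by running the same scheme with the lower-order Sobolev seminorm $B^{\gamma}_{p,p}$ in place of $B^s_{p,q'}$, once one checks $f\in B^{\gamma}_{p,p}$ for $\gamma$ in the admissible range; I would treat it as a routine modification. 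I expect the one genuinely delicate point to be the super-additivity of $A$ in Step 2 --- the bookkeeping over the three ranges of $h$, which is exactly where $q'\ge p$ is used --- hand in hand with the matching of exponents that pins down the value of $\beta$ through $\beta(s+1/q'-1/p)=1$. (For the norm inequality alone one can alternatively avoid the general-$q$ super-additivity by embedding $B^s_{p,q}\subset B^{\tilde s}_{p,p}$ for a suitable $\tilde s\in(1/p,s)$ with $1/\tilde s\le\beta$ and invoking the $q=p$ case.)
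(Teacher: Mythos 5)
Your proof is correct and follows essentially the same route as the paper: the GRR/H\"older increment bound in terms of the localized seminorm, super-additivity of ${\|f\|}^{q}_{B^s_{p,q};[\cdot,\cdot]}$ for $q\ge p$ (the paper does the same bookkeeping with indicator functions rather than your three $h$-ranges), the product-of-controls fact with exponents $\beta/q'+\alpha\beta=1$ (which the paper cites from Friz--Victoir, Exercise~1.9, and you prove directly via the two-term H\"older inequality), and reduction of the case $q\le p$ to $q'=p$ via the embedding $B^s_{p,q}\subset B^s_{p,p}$. Your brief treatment of part (2) also matches the paper, which likewise disposes of it in one line via the embedding $B^{s}_{p,q}\subset B^{\gamma}_{p,p}$ and the same scheme.
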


\begin{remark}
  While the embedding result stated in (1) is sharper than the one in (2) in the case $p=q$, the converse might be true in general depending on the choice of $\epsilon$.
\end{remark}

\begin{proof}
  Since $s> 1/p$ and $p\in (1,\infty)$, classical Besov embedding, see e.g. \cite[Corollary~26]{Simon1990}, leads to 
  \begin{equation*}
    \|f(t)-f(s)\| \leq \sup_{|t-s|\geq h >0 } \bigg(\frac{\|f(s+h)-f(s)\|}{h^{s - \frac{1}{p}}} \bigg) |t-s|^{s - \frac{1}{p}}\lesssim \|f\|_{s,p,q;[s,t]} |t-s|^{s - \frac{1}{p}},
  \end{equation*}
  for every $f\in B^{s}_{p,q}([0,1];E)$, that is $B^{s}_{p,q}([0,1];E)\subset B^{s-1/p}_{\infty,\infty}([0,1];E)= C^{\alpha}([0,1];E)$.
  
  (1) To see the finite $\beta$-variation of $f$ in the case $q\ge p$, we observe that the control function $\omega$ can be written as
  \begin{equation*}
    \omega (s,t)=  \tilde \omega (s,t)^\frac{\beta}{q} |t-s|^{\alpha \beta}\quad \text{with}\quad  \tilde \omega (s,t):=  \|f\|^{q}_{s,p,q;[s,t]}.
  \end{equation*}
  Because $\hat \omega (s,t):=|t-s|$ is a control function and $\beta/q +\alpha \beta=1$, it is sufficient to verify that $\tilde \omega $ is also a control function by \cite[Exercise~1.9]{Friz2010}. Thanks to the integral definition the continuity of $\tilde \omega$ is obvious and for the super-additivity we use $q \ge p$ to get 
  \begin{align*}
    \tilde \omega (s,t)+\tilde \omega (t,u) 
    & = \int_{0}^{t-s} \bigg(\int_{s}^{t-h} \frac{\|f(x+h)-f(x)\|^{p}}{h^{s p}}\dd x\bigg)^{\frac{q}{p}} \frac{\dd h}{h}\\
    &\qquad\qquad \quad  + \int_{0}^{u-t} \bigg(\int_{t}^{u-h} \frac{\|f(x+h)-f(x)\|^{p}}{h^{s p}}\dd x\bigg)^{\frac{q}{p}} \frac{\dd h}{h}\\
    & \leq \int_{0}^{u-s} \bigg(\int_{s}^{t}\1_{[s,t]}(x+h) \frac{\|f(x+h)-f(x)\|^{p}}{h^{s p}}\dd x\bigg)^{\frac{q}{p}} \\
    & \qquad\qquad \quad   +  \bigg(\int_{t}^{u}  \1_{[t,u]}(x+h)\frac{\|f(x+h)-f(x)\|^{p}}{h^{s p}}\dd x\bigg)^{\frac{q}{p}} \frac{\dd h}{h}\\
    & \leq \int_{0}^{u-s}  \bigg(\int_{s}^{u} (\1_{[s,t]}(x+h)+\1_{[t,u]}(x+h)) \frac{\|f(x+h)-f(x)\|^{p}}{h^{s p}}\dd x\bigg)^{\frac{q}{p}} \frac{\dd h}{h}\\
    &=\tilde \omega (s,u)
  \end{align*}
  for $0\leq s \leq t\leq u \leq 1$. In particular, we have proven that $B^{s}_{p,q} ([0,1];E)\subset C^{\beta\var}([0,1];E)$. 
  
  (2) By Besov embedding, see e.g. \cite[Corollary~15]{Simon1990}, we obtain 
  \begin{equation*}
    \|f\|_{\gamma,p,p;[s,t]}\lesssim \|f\|_{s,p,q;[s,t]}.
  \end{equation*}
  Therefore, (2) follows by the same arguments as for (1) or as an direct application of \cite[Theorem~2]{Friz2006}.
  
  (3) For $q \le p$ we have $\beta = s^{-1}$ and
  \begin{equation*}
     \|f\|_{s,p,p;[s,t]}\lesssim \|f\|_{s,p,q;[s,t]},
  \end{equation*}
  see e.g. \cite[Theorem~11]{Simon1990}. Hence, (3) follows again by the same arguments as for (1) or directly by \cite[Theorem~2]{Friz2006}.
\end{proof}

The reverse embedding, that is, how $p$-variation spaces embed into Besov spaces, is formulated in the next proposition. 

\begin{proposition}\label{prop:besov embedding}
   Suppose that $(E,\|\cdot\|)$ is a Banach space and $\beta \in (1,\infty)$. Then, the inclusion
  \begin{equation*}
    C^{\beta\var}([0,1];E) \subset B^{1/\beta}_{\beta,\infty}([0,1];E)
  \end{equation*}
  holds and for $f\in C([0,1];E)$ one has 
  \begin{equation*}
    \|f\|_{1/\beta,\beta,\infty} \leq 2 \|f\|_{\beta\var}.
  \end{equation*}
\end{proposition}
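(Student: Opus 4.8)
The goal is to bound $\|f\|_{B^{1/\beta}_{\beta,\infty}}$ by $2\|f\|_{\beta\var}$ for $f \in C^{\beta\var}([0,1];E)$. Writing out the $q = \infty$ modification, what must be estimated is
\[
  \|f\|_{B^{1/\beta}_{\beta,\infty}} = \|f(0)\| + \sup_{0 < h \le 1}\frac{1}{h^{1/\beta}}\Big(\int_0^{1-h}\|f(x+h)-f(x)\|^\beta\,\d x\Big)^{1/\beta}.
\]
So the plan is: fix $h \in (0,1]$ and estimate the inner integral. First I would use the existence of a control function $\omega$ with $\|f(t)-f(s)\|^\beta \le \omega(s,t)$ and $\omega(0,1) = \|f\|_{\beta\var}^\beta$; such a control exists by the characterization recalled just before the proposition (one can take $\omega(s,t) = \|f\|_{\beta\var;[s,t]}^\beta$ itself, which is super-additive). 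Then
\[
  \int_0^{1-h}\|f(x+h)-f(x)\|^\beta\,\d x \le \int_0^{1-h}\omega(x,x+h)\,\d x.
\]

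The key step is a Fubini/averaging argument: split $[0,1]$ into consecutive blocks of length $h$ and use super-additivity of $\omega$. Concretely, for each fixed offset I would observe that the points $x, x+h, x+2h, \dots$ form a partition-like chain, so $\sum_k \omega(x+kh, x+(k+1)h) \le \omega(0,1)$ by super-additivity (extending the endpoints appropriately). Integrating $x$ over one fundamental period of length $h$ and recombining, $\int_0^{1-h}\omega(x,x+h)\,\d x \le h\,\omega(0,1) = h\,\|f\|_{\beta\var}^\beta$. Hence the inner integral is at most $h\|f\|_{\beta\var}^\beta$, and dividing by $h^{1/\beta}$ and taking the $\beta$-th root gives exactly $\|f\|_{\beta\var}$ for the seminorm part. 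Adding $\|f(0)\| \le \|f\|_{\beta\var}$ (since $\|f(0)\|$ is dominated by, say, $\|f(0)-f(0)\| = 0$ — actually one needs the convention that the $\beta\var$-norm here includes a base point; checking the paper's definition, $\|f\|_{\beta\var}$ as defined only measures increments, so one should either absorb $\|f(0)\|$ into a redefined norm or note it trivially) yields the factor $2$.

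The main obstacle is the combinatorial averaging inequality $\int_0^{1-h}\omega(x,x+h)\,\d x \le h\,\omega(0,1)$: one must handle the boundary blocks carefully (the chain $x, x+h, \ldots$ need not land on $1$) and confirm that super-additivity applied to a chain with a short final segment still gives the bound, using $\omega \ge 0$ and monotonicity $\omega(s,t) \le \omega(s',t')$ for $[s,t]\subseteq[s',t']$ (which follows from super-additivity plus nonnegativity). A clean way is: for $x \in [0,h)$, let $N = \lfloor (1-x)/h \rfloor$ and write $\omega(x,x+h) + \omega(x+h,x+2h) + \cdots + \omega(x+(N-1)h, x+Nh) \le \omega(x, x+Nh) \le \omega(0,1)$; then $\int_0^{1-h}\omega(x,x+h)\,\d x = \int_0^h \sum_{k=0}^{N_x - 1}\omega(x+kh, x+(k+1)h)\,\d x \le \int_0^h \omega(0,1)\,\d x = h\,\omega(0,1)$, after checking the reindexing of the domain of integration is exactly a repackaging of $[0,1-h]$ into $[0,h)$ fibers. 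That bookkeeping, plus the $p = \infty$/$q = \infty$ modifications, is the only delicate point; everything else is routine.
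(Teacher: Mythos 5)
Your proof is correct, and it reaches the same destination by a slightly different, and in fact sharper, decomposition than the paper. The paper fixes the grid $x_k=kh$ once and for all, writes $\int_0^{1-h}\|f(x+h)-f(x)\|^\beta\,\mathrm{d}x=\sum_k\int_0^h\|f(x_{k+1}+y)-f(x_k+y)\|^\beta\,\mathrm{d}y$, and bounds each increment by $\|f\|_{\beta\var;[x_k,x_{k+2}]}$; since the intervals $[x_k,x_{k+2}]$ overlap with multiplicity two, super-additivity of $\|f\|_{\beta\var;[\cdot,\cdot]}^\beta$ then only yields the constant $2$. You instead fiber $[0,1-h]$ over the offset $x\in[0,h)$ and observe that for each fixed offset the points $x,x+h,\dots,x+N_xh$ form a single partition chain, so the telescoped sum is bounded by $\omega(0,1)=\|f\|_{\beta\var}^\beta$ with no overlap; this gives the seminorm bound with constant $1$ rather than $2$. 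The bookkeeping you flag as delicate does work out: the map $(x,k)\mapsto x+kh$ is a measure-preserving bijection from $\{(x,k):x\in[0,h),\,0\le k\le N_x-1\}$ onto $[0,1-h]$ up to a null set, and the short final segment costs nothing since $\omega\ge0$ and $x+N_xh\le1$. Two small remarks: you do not actually need to pass through the control function $\omega$ (one can telescope $\|f(x+(k+1)h)-f(x+kh)\|^\beta$ directly against the partition $\{x+kh\}_k\cup\{0,1\}$, which also sidesteps any measurability question for $x\mapsto\omega(x,x+h)$); and your hesitation about the $\|f(0)\|$ term is resolved by noting that the paper's proof, consistently with its metric-space definition in Section~2, treats both sides as seminorms without a base-point term, so no such term needs to be absorbed.
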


\begin{proof}
  For $f\in  C^{\beta\var}([0,1];E)$ we need to show that
  \begin{equation*}
    \|f\|_{1/\beta,\beta,\infty}^\beta:= \sup_{1> h>0} h^{-1} \int_{0}^{1-h} \| f(x+h)-f(x) \|^{\beta} \dd x <\infty,
  \end{equation*}
  that is, it is sufficient to show that 
  \begin{equation*}
     \int_{0}^{1-h} \| f(x+h)-f(x) \|^{\beta} \dd x \lesssim h
  \end{equation*}
  for every $h\in (0,1)$.
  
  Indeed, given $h\in (0,1)$, take a partition $0=x_0<x_1<\dots<x_N < 1-h$ such that 
  \begin{equation*}
    |x_{k+1}-x_k|= h \text{ for } k=0,\dots,N-1\quad \text{and} \quad |x_N-1+h|\leq h.
  \end{equation*}
  Setting $x_{N+1}:=1-h$, we observe that
  \begin{align*}
    & \int_{0}^{1-h} \| f(x+h)-f(x) \|^{\beta}\dd x 
    = \sum_{k=0}^{N}  \int_{x_k}^{x_{k+1}} \| f(x+h)-f(x) \|^{\beta}\dd x \\
    &\quad= \sum_{k=0}^{N-1}\int_{0}^h \|f(x_{k+1}+y)-f(x_k+y)\|^{\beta} \dd y + \int_{0}^{x_{N+1}-x_N}\|f(x_{k+1}+y)-f(x_k+y)\|^{\beta} \dd y.
  \end{align*}
  Since for $y\in [0,h]$ and $k=0,\dots,N$
  \begin{equation*}
    \|f(x_{k+1}+y)-f(x_k+y)\|  \leq \|f\|_{\beta\var;[x_{k},x_{k+2}]}
  \end{equation*}
  with $x_{N+2}:=1$, we obtain
  \begin{align*}
    \int_{0}^{1-h} \| f(x+h)-f(x) \|^{\beta}\dd x \leq 2 \|f\|_{\beta\var;[0,1]}^\beta h.
  \end{align*}
  Re-arranging and taking the superemum over~$h$ gives 
  \begin{equation*}
    \sup_{1>h>0} h^{-1} \int_{0}^{1-h} \| f(x+h)-f(x) \|^{\beta}\dd x \leq 2 \|f\|_{\beta\var;[0,1]}^\beta,
  \end{equation*}
  which completes the proof. 
\end{proof}

\providecommand{\bysame}{\leavevmode\hbox to3em{\hrulefill}\thinspace}
\providecommand{\MR}{\relax\ifhmode\unskip\space\fi MR }
\providecommand{\MRhref}[2]{%
  \href{http://www.ams.org/mathscinet-getitem?mr=#1}{#2}
}
\providecommand{\href}[2]{#2}


\end{document}